\newtheorem{theorem}{Theorem}[section]
\newtheorem{proposition}[theorem]{Proposition}
\newtheorem{corollary}[theorem]{Corollary}
\theoremstyle{definition}
\newtheorem{definition}[theorem]{Definition}
\newtheorem{property}[theorem]{Property}
\theoremstyle{remark}
\numberwithin{equation}{section}
\begin{document}

\setcounter{page}{1}

\title[Van der Corput lemmas for Mittag-Leffler functions]{Van der Corput lemmas for Mittag-Leffler functions. I.}

\author[M. Ruzhansky, B. T. Torebek]{Michael Ruzhansky, Berikbol T. Torebek}

\address{\textcolor[rgb]{0.00,0.00,0.84}{Michael Ruzhansky \newline Department of Mathematics: Analysis,
Logic and Discrete Mathematics \newline Ghent University, Krijgslaan 281, Ghent, Belgium \newline
 and \newline School of Mathematical Sciences \newline Queen Mary University of London, United Kingdom}}
\email{\textcolor[rgb]{0.00,0.00,0.84}{michael.ruzhansky@ugent.be}}
\address{\textcolor[rgb]{0.00,0.00,0.84}{Berikbol T. Torebek \newline Department of Mathematics: Analysis,
Logic and Discrete Mathematics \newline Ghent University, Krijgslaan 281, Ghent, Belgium \newline and \newline Institute of
Mathematics and Mathematical Modeling \newline 125 Pushkin str.,
050010 Almaty, Kazakhstan}}
\email{\textcolor[rgb]{0.00,0.00,0.84}{berikbol.torebek@ugent.be}}

\thanks{The authors were supported in parts by the FWO Odysseus 1 grant G.0H94.18N: Analysis and Partial Differential Equations and by the Methusalem programme of the Ghent University Special Research Fund (BOF) (Grant number 01M01021). The first author was supported by EPSRC grant EP/R003025/2 and EP/V005529/1. The second author was supported by the Science Committee of the Ministry of Education and Science of the Republic of Kazakhstan (Grant No. BR20281002).}
\date{\today}

\subjclass[2010]{Primary 42B20, 26D10; Secondary 33E12.}

\keywords{van der Corput lemma, Mittag-Leffler function, asymptotic estimate}

\begin{abstract} In this paper, we study analogues of the van der Corput lemmas involving Mittag-Leffler functions. The generalisation is that we replace the exponential function with the Mittag-Leffler-type function, to study oscillatory type integrals appearing in the analysis of time-fractional partial differential equations. Several generalisations of the first and second van der Corput lemmas are proved. Optimal estimates on decay orders for particular cases of the Mittag-Leffler functions are also obtained. As an application of the above results, the generalised Riemann-Lebesgue lemma and the Cauchy problem for the time-fractional evolution equation are considered.
\end{abstract}
\maketitle
\tableofcontents
\section{Introduction}
In harmonic analysis, one of the most important estimates is the van der Corput lemma, which is an estimate of the oscillatory integrals.
This estimate was first obtained by the Dutch mathematician Johannes Gaultherus van der Corput \cite{vdC21} and named in his honour.

Following Stein \cite{St93}, let us state the classical van der Corput lemmas as follows:
\begin{itemize}
  \item \textbf{van der Corput first lemma.} Suppose $\phi$ is a real-valued and smooth function in $[a,b].$ If $\psi$ is a smooth function, $\phi'$ is monotonic, and $|\phi'(x)|\geq 1$ for all $x\in(a,b),$ then
  \begin{equation}\label{vdC}
  \left|\int\limits^b_a e^{i\lambda\phi(x)}\psi(x)dx\right|\leq C\lambda^{-1},\,\,\,\lambda>0.
  \end{equation}
 \item \textbf{van der Corput second lemma.} Suppose $\phi$ is a real-valued and smooth function on $[a,b].$ If $|\phi^{(k)}(x)|\geq 1,\,k\geq 2$ for all $x\in[a,b]$ and $\psi$ is a smooth function, then
  \begin{equation}\label{vdC2}
  \left|\int\limits^b_a e^{i\lambda\phi(x)}\psi(x)dx\right|\leq C\lambda^{-1/k},\,\,\,\lambda>0.
  \end{equation}
\end{itemize}
Note that various generalisations of the van der Corput lemmas have been investigated in numerous papers \cite{Gr05, SW70, St93, PS92, PS94, Rog05, Par08, Xi17}. Multidimensional analogues of the van der Corput lemmas were studied in \cite{Bourg, CCW99, Tao05, Green, PSS01, KR07, Ruz12}. In particular, in \cite{Ruz12} the multi-dimensional van der Corput lemma was obtained with constants independent of the phase and amplitude.

A classic natural generalization of an exponential function is the Mittag-Leffler function defined as (see e.g. \cite{Kilbas, GorKMR})
\begin{equation*}E_{\alpha,\beta}\left(z\right)=\sum\limits_{k=0}^\infty\frac{z^k}{\Gamma(\alpha k+\beta)},\,\,\,\alpha>0, \,\,\,\beta\in \mathbb{R},\end{equation*} with the property that
\begin{equation}\label{EQ:e11}
E_{1,1}\left(z\right)=e^z.
\end{equation}
The main objective of this paper is to obtain the van der Corput lemmas for the integral defined by
\begin{equation}\label{EQ:i1}
I_{\alpha,\beta}(\lambda)=\int\limits_a^bE_{\alpha,\beta}\left(i\lambda \phi(x)\right)\psi(x)dx,\end{equation}
where $0<\alpha\leq 1,\,\beta>0,$ $\phi$ is a phase and $\psi$ is an amplitude, and $\lambda\geq 0.$

The second part of this paper dealing with integrals of the type \begin{equation}\label{0+}
\tilde{I}_{\alpha,\beta}(\lambda)=\int\limits_a^bE_{\alpha,\beta}\left(i^\alpha\lambda \phi(x)\right)\psi(x)dx,\end{equation} has appeared in \cite{RuzT2}. As we see above, the integral \eqref{EQ:i1} is different from the integral \eqref{0+}, since in \eqref{EQ:i1} there is a purely imaginary number $i$ before the phase function, and in \eqref{0+} the fractional power of the imaginary number, i.e. $i^\alpha$. The main difference between \eqref{EQ:i1} and \eqref{0+} is that if to study \eqref{EQ:i1} we used the estimate of the Mittag-Leffler functions given by \eqref{MLAsym}, then to study the integral \eqref{0+} we used the following estimate of the Mittag-Leffler functions
\begin{equation*}\left|E_{\alpha, \beta}(z)\right|\leq C_1(1+|z|)^{(1-\beta)/\alpha}\exp({\rm Re}(z^{1/\alpha}))+ \frac{C_2}{1+|z|},\, z\in \mathbb{C},\, |\arg(z)|\leq\mu.\end{equation*}
Since the asymptotic behaviour of Mittag-Leffler function in these cases is also different, yielding different decay rates. We refer to Section \ref{sec6} for the detailed comparison between results for $I_{\alpha,\beta}(\lambda)$ in \eqref{EQ:i1} and $\tilde{I}_{\alpha,\beta}(\lambda)$ in \eqref{0+}.

{\em In view of \eqref{EQ:e11}, we are talking about the generalisation of the van der Corput lemmas replacing $E_{1,1}$ by $E_{\alpha,\beta}$, so that instead of integrals in
\eqref{vdC} and \eqref{vdC2} we have the integral \eqref{EQ:i1}.}
As it is clear from the properties of the exponential function, the property whether $\phi\not=0$ or $\phi(c)=0$ at some point $c\in [a,b]$ is not important for estimates \eqref{vdC} and \eqref{vdC2}, as the terms $e^{i\lambda\gamma}$ can be taken out of such integrals for any constant $\gamma$ without changing the estimates. However, this is no longer the case for more general Mittag-Leffler functions. Therefore, the property whether $\phi\not=0$ or $\phi$ vanishes at some point will affect the estimates that we obtain.

Such integrals as in \eqref{EQ:i1} arise in the study of solutions of the time-fractional Schr\"{o}dinger equation and the time-fractional wave equation (for example see \cite{LG99, Lu13, SSK10, SZL19a, SZL19b, WX07}).

For the convenience of the reader, let us briefly summarise the results of this paper, distinguishing between different sets of assumptions:\\
\underline{\textbf{van der Corput first lemma}}
\begin{itemize}
  \item Let $-\infty\leq a<b\leq+\infty.$ Let $\phi:[a,b]\rightarrow \mathbb{R}$ be a measurable function and let $\psi\in L^1[a,b].$ If $0<\alpha<1,\,\beta>0,$ and $\operatorname{ess\,inf}\limits_{x\in[a,b]}|\phi(x)|> 0,$ then we have the estimate
\begin{equation*}
|I_{\alpha,\beta}(\lambda)|\lesssim \frac{1}{1+\lambda},\,\lambda\geq 0.
\end{equation*} Here and futher we use $X \lesssim Y$ to denote the estimate $X \leq CY$ for some constant $C$ independent on $\lambda.$
  \item  Let $-\infty\leq a<b\leq+\infty$ and $0<\alpha<1,\,\beta>0.$ Let $\phi\in L^\infty[a,b]$ be a real-valued differentiable function on $[a,b]$ vanishing at the some point $c\in(a,b).$ Let $\psi\in C_0[a,b],$ and let $m=\inf\limits_{x\in\text{supp}(\psi)}|\phi'(x)|> 0,$ then
\begin{equation*}
|I_{\alpha,\beta}(\lambda)|\lesssim \frac{\log(2+\lambda)}{1+\lambda},\,\lambda\geq 0.
\end{equation*}
  \item Let $0<\alpha<1$ and $-\infty< a<b<+\infty.$ Let $\phi\in C^1[a,b]$ be a real-valued function, $\phi'$ monotonic, and $\psi'\in L^1[a,b].$
\begin{description}
  \item[(i)] If $\phi'(x)\neq 0$ for all $x\in [a,b],$ then we have that
\begin{equation*}
|I_{\alpha,\alpha}(\lambda)|\lesssim \frac{1}{1+\lambda},\,\,\,\lambda\geq 0.\end{equation*}
  \item[(ii)] If $\phi(x)\neq 0$ and $\phi'(x)\neq 0$ for all $x\in [a,b],$ then
\begin{equation*}
|I_{\alpha,\alpha}(\lambda)|\lesssim \frac{1}{(1+\lambda)^{2}},\,\,\,\lambda\geq 0.\end{equation*}
\end{description}
\item Let $0<\alpha<1$ and $-\infty< a<b<+\infty.$ Let $\phi\in C^2[a,b]$ be a real-valued function and let $\psi\in C^1[a,b].$\\
\begin{description}
  \item[(i)] If $\phi'(x)\neq 0$ for all $x\in [a,b],$ then we have
\begin{equation*}
|I_{\alpha,\alpha}(\lambda)|\lesssim \frac{1}{1+\lambda},\,\,\,\lambda\geq 0.
\end{equation*}
\item[(ii)] If $\phi'(x)\neq 0$ for all $x\in [a,b]$ and $\psi(a)=\psi(b)=0,$ then we have
\begin{equation*}
|I_{\alpha,\alpha}(\lambda)|\lesssim \frac{\log(2+\lambda)}{(1+\lambda)^{2}},\,\,\,\lambda\geq 0.
\end{equation*}
  \item[(iii)] If $\phi(x)\neq 0$ and $\phi'(x)\neq 0$ for all $x\in [a,b],$ then we have
\begin{equation*}
|I_{\alpha,\alpha}(\lambda)|\lesssim\frac{1}{(1+\lambda)^{2}},\,\,\,\lambda\geq 0.
\end{equation*}
\end{description}
\item Let $-\infty\leq a<b\leq+\infty.$ Let $\phi\in L^\infty[a,b]$ be a real-valued function and let $\psi\in L^1[a,b].$ If $\beta>1$ and $\inf\limits_{x\in[a,b]}|\phi(x)|> 0,$ then we have the estimate
\begin{equation*}
|I_{1,\beta}(\lambda)|\lesssim \frac{1}{(1+\lambda)^{\beta-1}},\,\lambda\geq 0.
\end{equation*}
\end{itemize}
\underline{\textbf{van der Corput second lemma}}
\begin{itemize}
  \item Let $-\infty< a<b<+\infty.$ Let  $0<\alpha<1,\,\beta>0,$ $\phi$ is a real-valued function such that $\phi\in C^k[a,b]$ and let $\psi'\in L^1[a,b].$ If $\phi$ has finitely many zeros on $[a,b],$ and $|\phi^{(k)}(x)|\geq 1,\,k\geq 2$ for all $x\in [a,b],$ then we have
\begin{equation*}
|I_{\alpha,\beta}(\lambda)|\lesssim \frac{\log^{\frac{1}{k}}(2+\lambda)}{(1+\lambda)^{\frac1k}},\,\,\lambda\geq 0.
\end{equation*}
\item Let $-\infty< a<b<+\infty$ and $0<\alpha<1.$ Suppose $\phi$ is a real-valued function and let $\psi'\in L^1[a,b].$
If $\phi\in C^k[a,b],\,k\geq 2$ and $|\phi^{(k)}(x)|\geq 1$ for all $x\in [a,b],$ then we have the estimate
\begin{equation*}
|I_{\alpha,\alpha}(\lambda)|\lesssim \frac{1}{(1+\lambda)^{\frac{1}{k}}},\,\,\lambda\geq 0.
\end{equation*}
\end{itemize}

\underline{\textbf{Lower and upper estimates}}
\begin{itemize}
  \item Let $-\infty< a<b<+\infty$ and $0<\alpha\leq 1/2,\,\beta>2\alpha.$ Let $\phi\in L^\infty[a,b]$ be a real-valued function and let $\psi\in L^\infty[a,b].$\\
Suppose that $m_1=\inf\limits_{a\leq x\leq b}|\phi(x)|> 0$ and $m_2=\inf\limits_{a\leq x\leq b}|\psi(x)|> 0.$ Then
\begin{equation*}
|I_{\alpha,\beta}(\lambda)| \leq \frac{K_1\|\psi\|_{L^\infty}}{(b-a)}\frac{1+\lambda \|\phi\|_{L^\infty}}{1+k_1\lambda^2m_1^2},\,\lambda\geq 0,
\end{equation*}  where $K_1=\max\left\{\frac{1}{\Gamma(\beta)},\frac{1}{\Gamma(\alpha+\beta)}\right\}$ and $k_1=\min\left\{\frac{\Gamma(\beta)}{\Gamma(2\alpha+\beta)}, \frac{\Gamma(\alpha+\beta)}{\Gamma(3\alpha+\beta)}\right\},$\\
and
\begin{equation*}|I_{\alpha,\beta}(\lambda)|\geq\frac{m_2}{(b-a)\Gamma(\alpha+\beta)}\frac{\lambda m_1}{1+ \frac{\Gamma(\beta-\alpha)}{\Gamma(\alpha+\beta)}\lambda^2\|\phi\|_{L^\infty}^2},\,\lambda\geq 0.\end{equation*}
If $m_1=\inf\limits_{a\leq x\leq b}|\phi(x)|=0,$ then
\begin{equation*}
|I_{\alpha,\beta}(\lambda)|\geq
\frac{m_2}{(b-a)\Gamma(\beta)}\frac{1}{1+\frac{\Gamma(\beta-2\alpha)}{\Gamma(\beta)}\lambda^2\|\phi\|_{L^\infty}^2},\,\,\lambda\geq 0.
\end{equation*}
\item Let $-\infty< a<b<+\infty$ and $0<\alpha< 1/2,\,\beta= 2\alpha.$ Let $\phi\in L^\infty[a,b]$ be a real-valued function and let $\psi\in L^\infty[a,b].$ Let $m_1=\inf\limits_{a\leq x\leq b}|\phi(x)|> 0$ and $m_2=\inf\limits_{a\leq x\leq b}|\psi(x)|> 0.$ Then we have the following estimates
\begin{equation*}
|I_{\alpha,2\alpha}(\lambda)| \leq \frac{K\|\psi\|_{L^\infty}}{(b-a)}\frac{1+\lambda \|\phi\|_{L^\infty}\left(1+\sqrt{\frac{\Gamma(1+2\alpha)}{\Gamma(1+4\alpha)}}\lambda^2\|\phi\|_{L^\infty}^2\right)} {\left(1+\min\left\{\frac{\Gamma(3\alpha)}{\Gamma(5\alpha)}, \sqrt{\frac{\Gamma(1+2\alpha)}{\Gamma(1+4\alpha)}}\right\}\lambda^2m_1^2\right)^2},\,\lambda\geq 0,
\end{equation*}
and
\begin{equation*}|I_{\alpha,2\alpha}(\lambda)|\geq\frac{m_2}{(b-a)\Gamma(3\alpha)}\frac{\lambda m_1}{1+ \frac{\Gamma(\alpha)}{\Gamma(3\alpha)}\lambda^2\|\phi\|_{L^\infty}^2},\,\lambda\geq 0,\end{equation*}
where $K=\max\left\{\frac{1}{\Gamma(2\alpha)}, \frac{1}{\Gamma(3\alpha)}\right\}.$\\
If $m_1=\inf\limits_{a\leq x\leq b}|\phi(x)|=0$ and $m_2=\inf\limits_{a\leq x\leq b}|\psi(x)|> 0,$ then we have
\begin{equation*}
|I_{\alpha,2\alpha}(\lambda)|\geq
\frac{m_2}{(b-a)\Gamma(2\alpha)}\frac{1}{\left(1+\sqrt{\frac{\Gamma(1-2\alpha)} {\Gamma(1+2\alpha)}}\lambda^2\|\phi\|_{L^\infty}^2\right)^2},\,\,\lambda\geq 0.
\end{equation*}
\end{itemize}

\subsection{Preliminaries}
In this section we briefly review some preliminary properties for the sake of the rest of the paper.
\subsubsection{Mittag-Leffler function and some of its properties}
The Mittag-Leffler type function is defined as (see e.g. \cite{Kilbas, GorKMR})
\begin{equation}\label{MLF}E_{\alpha,\beta}\left(z\right)=\sum\limits_{k=0}^\infty\frac{z^k}{\Gamma(\alpha k+\beta)},\,\,\,\alpha>0, \,\,\,\beta\in \mathbb{R}.\end{equation}
The function \eqref{MLF} is an entire function of order $\frac{1}{\alpha}$ and type 1 (see e.g. \cite{Evgr, GorKMR}).

From the definition of function \eqref{MLF}, we obtain a series of formulas relating the Mittag-Leffler function $E_{\alpha, \beta}(z)$ with elementary and special functions, which we list here for the convenience of the reader:
\begin{itemize}
 \item In the case $\alpha=1$ and $\beta=1$ we have
\begin{equation*}E_{1,1}\left(z\right)=\exp(z)=e^z;\end{equation*}
 \item If $\beta=1,$ then we have the classical Mittag-Leffler function \cite{Mit03}
\begin{equation*}E_{\alpha,1}\left(z\right)=\sum\limits_{k=0}^\infty\frac{z^k}{\Gamma(\alpha k+1)},\,\alpha>0.\end{equation*}
\end{itemize}
We will need the following properties.
\begin{proposition}[\cite{Pod99}] If $0<\alpha<2,$ $\beta$ is an arbitrary real number, $\mu$ is such that $\pi\alpha/2<\mu<\min\{\pi, \pi\alpha\}$, then there is $C>0,$ such that we have
\begin{equation}\label{MLAsym} \left|E_{\alpha, \beta}(z)\right|\leq \frac{C}{1+|z|},\, z\in \mathbb{C},\, \mu\leq |\arg(z)|\leq\pi.\end{equation}
\end{proposition}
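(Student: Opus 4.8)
\emph{Proof idea.} The estimate \eqref{MLAsym} is classical; the plan is to combine the one--term asymptotic expansion of $E_{\alpha,\beta}$ at infinity, valid in the sector $\mu\le|\arg z|\le\pi$, with the fact that $E_{\alpha,\beta}$ is entire. First I would fix an auxiliary angle $\theta$ with $\pi\alpha/2<\theta<\mu$ --- this is possible because $\mu<\min\{\pi,\pi\alpha\}\le\pi\alpha$ --- and start from the Hankel--type integral representation
\[
E_{\alpha,\beta}(z)=\frac{1}{2\pi i\,\alpha}\int_{\gamma(\epsilon;\theta)}\frac{t^{(1-\beta)/\alpha}\,e^{t^{1/\alpha}}}{t-z}\,dt,\qquad |\arg z|>\theta,
\]
where $\gamma(\epsilon;\theta)$ is the contour built from the two rays $\{\arg t=\pm\theta,\ |t|\ge\epsilon\}$ and the arc $\{|t|=\epsilon,\ |\arg t|\le\theta\}$, oriented so that $\arg t$ is non--decreasing, and the fractional powers are taken with the principal branch. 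This representation is obtained in the usual way by substituting the Hankel integral for $1/\Gamma(\alpha k+\beta)$ into the defining series \eqref{MLF} and interchanging summation and integration (for $z$ outside the stated sector a residue term is added, which does not concern us).

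Next I would use the exact identity $\frac{1}{t-z}=-\frac1z+\frac{t}{z(t-z)}$; inserting it and recognising the Hankel integral for $1/\Gamma(\beta-\alpha)$ in the first resulting term yields
\[
E_{\alpha,\beta}(z)=-\frac{1}{\Gamma(\beta-\alpha)}\,\frac1z\;+\;\frac{1}{2\pi i\,\alpha\,z}\int_{\gamma(\epsilon;\theta)}\frac{t^{(1-\beta)/\alpha+1}\,e^{t^{1/\alpha}}}{t-z}\,dt.
\]
On $\gamma(\epsilon;\theta)$ one has $\operatorname{Re}(t^{1/\alpha})=|t|^{1/\alpha}\cos(\theta/\alpha)$ with $\pi/2<\theta/\alpha<\pi$, so $\cos(\theta/\alpha)<0$ and the integrand decays exponentially in $|t|$ along the two rays while being bounded on the arc; and since $|\arg z|\ge\mu$ whereas $|\arg t|\le\theta<\mu$ on the contour, the elementary estimate $|t-z|\ge\sin(\mu-\theta)\,\max\{|t|,|z|\}$ holds. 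Consequently the last integral is bounded by a constant depending only on $\alpha,\beta,\mu$ as soon as $|z|\ge1$, and since $1/\Gamma(\beta-\alpha)$ is finite (it vanishes when $\beta-\alpha$ is a non--positive integer) we obtain
\[
|E_{\alpha,\beta}(z)|\le\frac{C_1}{|z|},\qquad \mu\le|\arg z|\le\pi,\ |z|\ge1 .
\]

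Finally, since $1/\Gamma$ is entire the series \eqref{MLF} converges for every $z\in\mathbb{C}$, so $E_{\alpha,\beta}$ is entire, hence continuous and bounded on the closed unit disc by some $C_2$, where $|E_{\alpha,\beta}(z)|\le C_2\le\frac{2C_2}{1+|z|}$. Together with the previous display (and $1+|z|\le2|z|$ for $|z|\ge1$) this gives \eqref{MLAsym} with $C=2\max\{C_1,C_2\}$. The only genuine work lies in the first two steps: justifying the contour representation and the term--by--term manipulation, and --- the main obstacle --- showing that the remainder integral is bounded uniformly for $z$ in the whole closed sector $\mu\le|\arg z|\le\pi$, for which the exponential decay along the rays $\arg t=\pm\theta$ (this is exactly why $\theta>\pi\alpha/2$ is needed) and the lower bound $|t-z|\gtrsim\max\{|t|,|z|\}$ are the decisive ingredients.
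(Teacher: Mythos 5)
Your proposal is correct and follows essentially the classical argument that the paper itself relies on: the paper gives no proof of this proposition, citing \cite{Pod99}, and your sketch (the Hankel-type contour representation, the splitting $\frac{1}{t-z}=-\frac1z+\frac{t}{z(t-z)}$ producing the leading term $-\frac{1}{\Gamma(\beta-\alpha)\,z}$, uniform control of the remainder via the exponential decay $\cos(\theta/\alpha)<0$ along the rays and the bound $|t-z|\geq \sin(\mu-\theta)\max\{|t|,|z|\}$, plus boundedness of the entire function on the unit disc) is precisely Podlubny's proof of the one-term asymptotic expansion from which \eqref{MLAsym} follows. One cosmetic remark: the existence of $\theta$ with $\pi\alpha/2<\theta<\mu$ is guaranteed by the hypothesis $\mu>\pi\alpha/2$, while the condition $\mu<\min\{\pi,\pi\alpha\}$ is what ensures $\theta\leq\pi$ and $\theta/\alpha<\pi$, hence $\cos(\theta/\alpha)<0$, so your parenthetical justification attaches the right inequality to the wrong step.
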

\begin{proposition}[\cite{TSim19}] The following optimal estimates are valid for the real-valued Mittag-Leffler function
\begin{equation}\label{ML1Op} \frac{1}{1+\Gamma(1-\alpha)x}\leq E_{\alpha, 1}(-x)\leq \frac{1}{1+\frac{1}{\Gamma(1+\alpha)}x},\, x\geq 0,\, 0<\alpha< 1;\end{equation}
\begin{equation}\label{ML2Op} \frac{1}{\left(1+\sqrt{\frac{\Gamma(1-\alpha)}{\Gamma(1+\alpha)}}x\right)^2}\leq \Gamma(\alpha)E_{\alpha, \alpha}(-x)\leq \frac{1}{\left(1+\sqrt{\frac{\Gamma(1+\alpha)}{\Gamma(1+2\alpha)}}x\right)^2},\, x\geq 0,\, 0<\alpha< 1;\end{equation}
\begin{equation}\label{ML3Op} \frac{1}{1+\frac{\Gamma(\beta-\alpha)}{\Gamma(\beta)}x}\leq \Gamma(\beta)E_{\alpha, \beta}(-x)\leq \frac{1}{1+\frac{\Gamma(\beta)}{\Gamma(\beta+\alpha)}x},\, x\geq 0,\, 0<\alpha\leq 1,\,\beta>\alpha.\end{equation}
\end{proposition}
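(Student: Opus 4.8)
\smallskip
\noindent\emph{Proof strategy.} The plan is to reduce each of the three two-sided estimates first to the monotonicity of one auxiliary function of a single real variable, and ultimately to an elementary differential inequality for the Mittag-Leffler function. I would set $f_\beta(x):=\Gamma(\beta)E_{\alpha,\beta}(-x)$, so that $f_\beta(0)=1$, and use the classical complete monotonicity (Pollard, Schneider) of $x\mapsto E_{\alpha,\beta}(-x)$ for $0<\alpha\le 1$ and $\beta\ge\alpha$: it gives that $f_\beta$ is positive, strictly decreasing on $[0,\infty)$, and has a representation $f_\beta(x)=\int_0^\infty e^{-xt}\,d\nu_{\alpha,\beta}(t)$ with $\nu_{\alpha,\beta}$ a probability measure. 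From the series \eqref{MLF} one reads off $f_\beta(x)=1-\frac{\Gamma(\beta)}{\Gamma(\alpha+\beta)}x+\frac{\Gamma(\beta)}{\Gamma(2\alpha+\beta)}x^{2}-\cdots$ near $x=0$, and from the classical asymptotics of $E_{\alpha,\beta}(-x)$ as $x\to+\infty$ one gets $f_\beta(x)=\frac{\Gamma(\beta)}{\Gamma(\beta-\alpha)}x^{-1}+O(x^{-2})$ when $\beta>\alpha$; in the borderline case $\beta=\alpha$ the $x^{-1}$-term vanishes and $f_\alpha(x)=\frac{\Gamma(1+\alpha)}{\Gamma(1-\alpha)}x^{-2}+O(x^{-3})$, which is precisely why the exponent in \eqref{ML2Op} is $2$ instead of $1$.

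For \eqref{ML1Op} and \eqref{ML3Op} (so $\beta>\alpha$) I would put $g:=1/f_\beta$; the two inequalities become exactly $1+\frac{\Gamma(\beta)}{\Gamma(\alpha+\beta)}x\le g(x)\le 1+\frac{\Gamma(\beta-\alpha)}{\Gamma(\beta)}x$. Since a nondecreasing function lies between its one-sided limits, it suffices to show that the difference quotient $q(x):=\frac{g(x)-g(0)}{x}=\frac{1-f_\beta(x)}{x\,f_\beta(x)}$ is nondecreasing on $(0,\infty)$: the expansions above give $\lim_{x\to 0^+}q(x)=g'(0)=\frac{\Gamma(\beta)}{\Gamma(\alpha+\beta)}$ and $\lim_{x\to\infty}q(x)=\lim_{x\to\infty}g(x)/x=\frac{\Gamma(\beta-\alpha)}{\Gamma(\beta)}$, and these two endpoint slopes are correctly ordered because log-convexity of $\Gamma$ yields $\Gamma(\beta)^{2}\le\Gamma(\beta-\alpha)\Gamma(\beta+\alpha)$. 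A short computation shows that $q'\ge 0$ on $(0,\infty)$ is equivalent to the pointwise inequality
\[
-\,x\,f_\beta'(x)\ \ge\ f_\beta(x)\bigl(1-f_\beta(x)\bigr),\qquad x\ge 0,
\]
which, being an equality at $x=0$ and asymptotically sharp (both sides are $\sim\frac{\Gamma(\beta)}{\Gamma(\beta-\alpha)}x^{-1}$), carries all the content. For \eqref{ML2Op} I would run the same scheme with $g:=f_\alpha^{-1/2}$ in place of $1/f_\beta$: the linear bounds again have slopes $g'(0)$ and $\lim_{x\to\infty}g(x)/x$, computed from \eqref{MLF} and the $x^{-2}$-asymptotics, and the monotonicity of $(g(x)-1)/x$ reduces to $-\,x\,f_\alpha'(x)\ge 2f_\alpha(x)\bigl(1-\sqrt{f_\alpha(x)}\,\bigr)$.

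The heart of the matter — and the step I expect to be the main obstacle, since no slack is available — is proving these differential inequalities. I see two routes. \emph{Analytic:} use the recurrence $E_{\alpha,\beta}(z)=\frac{1}{\Gamma(\beta)}+zE_{\alpha,\beta+\alpha}(z)$ together with the derivative identity $\frac{d}{dx}E_{\alpha,1}(-x)=-\frac{1}{\alpha}E_{\alpha,\alpha}(-x)$ — itself a termwise consequence of \eqref{MLF}, via $\alpha x\frac{d}{dx}E_{\alpha,\beta}(-x)=E_{\alpha,\beta-1}(-x)-(\beta-1)E_{\alpha,\beta}(-x)$ and $E_{\alpha,0}(-x)=-xE_{\alpha,\alpha}(-x)$ — so that, after substitution, $-x f_\beta'\ge f_\beta(1-f_\beta)$ turns into a product-type inequality among the functions $E_{\alpha,\gamma}(-x)$ with $\gamma$ running over the finite chain $\beta,\beta+\alpha,\beta+2\alpha$, and one then closes it by induction along this chain, starting from the crude bound \eqref{MLAsym}. \emph{Probabilistic:} from $f_\beta(x)=\int_0^\infty e^{-xt}\,d\nu_{\alpha,\beta}(t)$ and exponential tilting, convexity of $1/f_\beta$ (which implies the wanted monotonicity of $q$) is equivalent to saying that for every $x\ge 0$ the tilted law $\propto e^{-xt}\,d\nu_{\alpha,\beta}(t)$ has variance no larger than its squared mean — at $x=0$ this is just $\Gamma(\beta)\Gamma(2\alpha+\beta)\ge\Gamma(\alpha+\beta)^{2}$, i.e.\ log-convexity of $\Gamma$ once more — and the crux becomes control of this coefficient of variation uniformly in $x$, for which one can use the explicit density of $\nu_{\alpha,\beta}$ available when $0<\alpha<1$. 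I would expect the probabilistic route to be the cleaner one.

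Once the monotonicity of $q$ is secured, optimality is automatic: in each estimate the displayed constant equals either the first Taylor coefficient of $f_\beta$ at $x=0$ (for the bound tangent at the origin) or the leading coefficient of the $x\to+\infty$ expansion (for the bound tangent at infinity), so neither constant can be improved, and the exponent ($1$ versus $2$) is dictated by whether $\beta>\alpha$ or $\beta=\alpha$.
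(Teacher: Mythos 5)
The paper itself offers no proof of this proposition: it is quoted verbatim from \cite{TSim19}, so your proposal can only be judged on its own terms, and as it stands it is a reduction, not a proof. The reductions you set up are correct: with $f_\beta=\Gamma(\beta)E_{\alpha,\beta}(-\cdot)$, monotonicity of the difference quotient $q(x)=(1/f_\beta(x)-1)/x$, together with the endpoint values $q(0^+)=\Gamma(\beta)/\Gamma(\alpha+\beta)$ and $q(+\infty)=\Gamma(\beta-\alpha)/\Gamma(\beta)$, does yield \eqref{ML1Op} and \eqref{ML3Op}; your equivalences $q'\ge 0\iff -xf_\beta'\ge f_\beta(1-f_\beta)$, and $-xf_\alpha'\ge 2f_\alpha(1-\sqrt{f_\alpha})$ for the $\beta=\alpha$ case, check out, as does the reformulation of convexity of $1/f_\beta$ as a variance-versus-squared-mean bound for the exponentially tilted spectral measure. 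But the proof stops exactly where the proposition lives: that differential inequality, sharp at $x=0$ and at $x=\infty$, is the entire content, and neither of your routes establishes it. The analytic route cannot be closed ``starting from the crude bound \eqref{MLAsym}'', since that estimate carries an unspecified constant $C$ and so can never enter an argument in which, as you yourself note, no slack is available; moreover the chain $\beta,\beta+\alpha,\beta+2\alpha$ is finite and carries no induction parameter, and what is really needed there is a sharp-constant (Tur\'an-type) comparison between $E_{\alpha,\beta}$, $E_{\alpha,\beta+\alpha}$, $E_{\alpha,\beta+2\alpha}$, a statement of the same difficulty as the target. The probabilistic route stops at the same point: complete monotonicity only gives the opposite-type bound $f_\beta f_\beta''\ge (f_\beta')^2$ (log-convexity), so the uniform coefficient-of-variation estimate for all tilts of $\nu_{\alpha,\beta}$ is precisely the hard step, and ``use the explicit density'' is not an argument.

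There is also a concrete mismatch in your optimality claim for \eqref{ML2Op}. The tangent slope of $g=f_\alpha^{-1/2}$ at $0$ is $\tfrac{1}{2}\Gamma(\alpha)/\Gamma(2\alpha)=\Gamma(1+\alpha)/\Gamma(1+2\alpha)$, not $\sqrt{\Gamma(1+\alpha)/\Gamma(1+2\alpha)}$; so your scheme, even if completed, proves the upper bound with the constant $\Gamma(1+\alpha)/\Gamma(1+2\alpha)$ rather than the displayed one, and your closing assertion that every displayed constant is a Taylor or asymptotic coefficient fails for this case. In fact the displayed constant cannot be correct: at $\alpha=1/2$, $x=0.1$ one has $\Gamma(1/2)E_{1/2,1/2}(-0.1)\approx 0.8411$, while $\bigl(1+\sqrt{\Gamma(3/2)}\cdot 0.1\bigr)^{-2}\approx 0.8353$, so the square root in the printed upper bound of \eqref{ML2Op} appears to be a typo and your tangent-at-zero constant is the right one; but a proof attempt must flag this discrepancy rather than silently identify the two constants.
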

The following properties of the Mittag-Leffler function with special argument possibly follow from its known properties. But, we give simple proofs for completeness.
\begin{proposition}[Fractional Euler formula] Let $\alpha, \beta>0$ and $\phi:[a,b]\rightarrow \mathbb{C}.$ Then for all $\lambda\in \mathbb{C}$ we have
\begin{equation}\label{FEuler}
E_{\alpha,\beta}\left(i\lambda \phi(x)\right)=E_{2\alpha,\beta}\left(-\lambda^{2} \phi^2(x)\right)+ i\lambda\phi(x)E_{2\alpha,\beta+\alpha}\left(-\lambda^{2} \phi^2(x)\right).
\end{equation}
\end{proposition}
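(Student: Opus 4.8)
The plan is to prove \eqref{FEuler} pointwise in $x\in[a,b]$ by splitting the defining power series of $E_{\alpha,\beta}$ into its even- and odd-indexed terms. Fix $x$ and set $z:=i\lambda\phi(x)\in\mathbb{C}$. Since $E_{\alpha,\beta}$ is an entire function (of order $1/\alpha$ and type $1$, as recalled just after \eqref{MLF}), the series $\sum_{k\ge 0}z^k/\Gamma(\alpha k+\beta)$ converges absolutely for every $z\in\mathbb{C}$, which legitimates rearranging it and regrouping the terms according to the parity of $k$.

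First I would handle the even part: writing $k=2m$ with $m\ge 0$, one has $z^{2m}=(i\lambda\phi(x))^{2m}=(-1)^m\lambda^{2m}\phi(x)^{2m}=\bigl(-\lambda^2\phi^2(x)\bigr)^m$ while $\Gamma(\alpha k+\beta)=\Gamma(2\alpha m+\beta)$, so that
\[
\sum_{m=0}^{\infty}\frac{z^{2m}}{\Gamma(2\alpha m+\beta)}=\sum_{m=0}^{\infty}\frac{\bigl(-\lambda^2\phi^2(x)\bigr)^m}{\Gamma(2\alpha m+\beta)}=E_{2\alpha,\beta}\bigl(-\lambda^2\phi^2(x)\bigr).
\]
For the odd part, writing $k=2m+1$, I would factor out $z=i\lambda\phi(x)$ and again use $z^{2m}=\bigl(-\lambda^2\phi^2(x)\bigr)^m$ together with $\Gamma(\alpha(2m+1)+\beta)=\Gamma\bigl(2\alpha m+(\alpha+\beta)\bigr)$, which gives
\[
\sum_{m=0}^{\infty}\frac{z^{2m+1}}{\Gamma(\alpha(2m+1)+\beta)}=i\lambda\phi(x)\sum_{m=0}^{\infty}\frac{\bigl(-\lambda^2\phi^2(x)\bigr)^m}{\Gamma\bigl(2\alpha m+(\alpha+\beta)\bigr)}=i\lambda\phi(x)\,E_{2\alpha,\beta+\alpha}\bigl(-\lambda^2\phi^2(x)\bigr).
\]
Adding the two identities yields \eqref{FEuler}.

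There is essentially no serious obstacle here: the only point requiring care is the justification of the regrouping of the series, which is immediate from the absolute convergence of the Mittag-Leffler series on all of $\mathbb{C}$; after that the computation is purely formal, and for $\alpha=\beta=1$ it reduces, via \eqref{EQ:e11} and the elementary identities $E_{2,1}(-x^2)=\cos x$ and $xE_{2,2}(-x^2)=\sin x$, to the classical Euler formula $e^{ix}=\cos x+i\sin x$. Note also that the statement is genuinely pointwise in $x$, so no regularity or integrability of $\phi$ is needed — the hypothesis $\phi:[a,b]\to\mathbb{C}$ only serves to make both sides well defined as functions on $[a,b]$.
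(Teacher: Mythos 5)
Your proof is correct and follows essentially the same route as the paper: both arguments split the defining series of $E_{\alpha,\beta}(i\lambda\phi(x))$ into its even- and odd-indexed parts and identify them with $E_{2\alpha,\beta}(-\lambda^2\phi^2(x))$ and $i\lambda\phi(x)E_{2\alpha,\beta+\alpha}(-\lambda^2\phi^2(x))$. The paper achieves the parity split via the algebraic identity $\frac{1}{2}\bigl(i^j+(-i)^j\bigr)$, $\frac{1}{2}\bigl(i^j-(-i)^j\bigr)$ rather than your direct reindexing $k=2m$, $k=2m+1$, and your explicit appeal to absolute convergence to justify the regrouping is a welcome (if minor) addition.
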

\begin{proof}
From the representation of the Mittag-Leffler function \eqref{MLF} we have
\begin{align*}
E_{\alpha,\beta}\left(i\lambda \phi(x)\right)&=\sum\limits_{j=0}^\infty i^j\lambda^{j}\frac{\phi^j(x)}{\Gamma(\alpha j+\beta)}\\&=\frac{1}{2}\sum\limits_{j=0}^\infty i^j\lambda^{j}\frac{\phi^j(x)}{\Gamma(\alpha j+\beta)}+\frac{1}{2}\sum\limits_{j=0}^\infty i^j\lambda^{j}\frac{\phi^j(x)}{\Gamma(\alpha j+\beta)}\\&= \frac{1}{2}\sum\limits_{j=0}^\infty i^j\lambda^{j}\frac{\phi^j(x)}{\Gamma(\alpha j+\beta)}+\frac{1}{2}\sum\limits_{j=0}^\infty (-i)^j\lambda^{j}\frac{\phi^j(x)}{\Gamma(\alpha j+\beta)}\\&+ \frac{1}{2}\sum\limits_{j=0}^\infty i^j\lambda^{j}\frac{\phi^j(x)}{\Gamma(\alpha j+\beta)}-\frac{1}{2}\sum\limits_{j=0}^\infty (-i)^j\lambda^{j}\frac{\phi^j(x)}{\Gamma(\alpha j+\beta)}\\&=\frac{1}{2}\sum\limits_{j=0}^\infty (i^j+(-i)^j)\lambda^{j}\frac{\phi^j(x)}{\Gamma(\alpha j+\beta)}\\&+\frac{1}{2}\sum\limits_{j=0}^\infty (i^j-(-i)^j)\lambda^{j}\frac{\phi^j(x)}{\Gamma(\alpha j+\beta)} \\&=\sum\limits_{j=0}^\infty i^{2j}\lambda^{2 j}\frac{\phi^{2j}(x)}{\Gamma(2\alpha j+\beta)}
+\sum\limits_{j=0}^\infty i^{2j+1}\lambda^{2j+1}\frac{\phi^{2j+1}(x)}{\Gamma(2\alpha j+\alpha+\beta)}\\&=E_{2\alpha,\beta}\left(-\lambda^{2} \phi^2(x)\right)+ i\lambda\phi(x)E_{2\alpha,\beta+\alpha}\left(-\lambda^{2} \phi^2(x)\right).\end{align*} This completes the proof.
\end{proof}

\begin{proposition} Let $\phi$ be a differentiable function such that $\phi'(x)\neq 0$ for all $x\in[a,b].$ Then for any $\alpha>0$ and $\lambda\in\mathbb{C}$ we have
\begin{equation}\label{ML1}
E_{\alpha,\alpha}\left(i\lambda \phi(x)\right) =\frac{\alpha}{i\lambda\phi'(x)}\frac{d}{dx}\left(E_{\alpha,1}\left(i\lambda \phi(x)\right)\right).
\end{equation}
\end{proposition}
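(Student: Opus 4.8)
The plan is to prove \eqref{ML1} by expanding $E_{\alpha,1}$ into its defining power series \eqref{MLF}, differentiating term by term, and then simplifying the resulting Gamma factors. The only analytic point is the legitimacy of term-by-term differentiation: since $E_{\alpha,1}$ is entire, the series $\sum_{k}z^{k}/\Gamma(\alpha k+1)$ and its formal derivative converge absolutely and uniformly on compact subsets of $\mathbb{C}$, so composing with the differentiable map $x\mapsto i\lambda\phi(x)$ and using the chain rule gives
\[
\frac{d}{dx}\,E_{\alpha,1}\bigl(i\lambda\phi(x)\bigr)=\sum_{k=1}^{\infty}\frac{k\,(i\lambda)^{k}\phi(x)^{k-1}\phi'(x)}{\Gamma(\alpha k+1)}=i\lambda\phi'(x)\sum_{k=1}^{\infty}\frac{k\,(i\lambda\phi(x))^{k-1}}{\Gamma(\alpha k+1)}.
\]

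Next I would divide by $i\lambda\phi'(x)$ — permissible since $\phi'(x)\neq0$ and $\lambda\neq0$, the value of the right-hand side of \eqref{ML1} at $\lambda=0$ being read off from the series after this cancellation — then multiply by $\alpha$ and reindex by $j=k-1$, obtaining
\[
\frac{\alpha}{i\lambda\phi'(x)}\,\frac{d}{dx}\,E_{\alpha,1}\bigl(i\lambda\phi(x)\bigr)=\alpha\sum_{k=1}^{\infty}\frac{k\,(i\lambda\phi(x))^{k-1}}{\Gamma(\alpha k+1)}=\sum_{j=0}^{\infty}\frac{\alpha(j+1)\,(i\lambda\phi(x))^{j}}{\Gamma(\alpha j+\alpha+1)}.
\]

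The key — and essentially only — nontrivial step is to recognise these coefficients. Applying the functional equation $\Gamma(t+1)=t\,\Gamma(t)$ with $t=\alpha(j+1)=\alpha j+\alpha$ yields $\Gamma(\alpha j+\alpha+1)=\alpha(j+1)\,\Gamma(\alpha j+\alpha)$, hence $\dfrac{\alpha(j+1)}{\Gamma(\alpha j+\alpha+1)}=\dfrac{1}{\Gamma(\alpha j+\alpha)}$. Substituting this into the last display collapses the series to $\sum_{j=0}^{\infty}(i\lambda\phi(x))^{j}/\Gamma(\alpha j+\alpha)=E_{\alpha,\alpha}(i\lambda\phi(x))$, which is precisely \eqref{ML1}. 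I do not anticipate any genuine obstacle here; the points deserving a word of care are the justification of termwise differentiation (covered by the entirety of $E_{\alpha,1}$ recorded in Section~\ref{} above) and the harmless cancellation of the factor $i\lambda$ prior to dividing, which also handles the degenerate value $\lambda=0$.
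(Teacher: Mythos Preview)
Your argument is correct and essentially identical to the paper's own proof: both expand $E_{\alpha,1}$ as its power series, differentiate termwise, and use the functional equation $\Gamma(t+1)=t\,\Gamma(t)$ to convert the coefficients $k/\Gamma(\alpha k+1)$ into $1/(\alpha\,\Gamma(\alpha k))$, the only cosmetic difference being that the paper applies the Gamma identity before reindexing while you reindex first. Your extra remarks on the legitimacy of termwise differentiation and the degenerate case $\lambda=0$ are fine additions that the paper leaves implicit.
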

\begin{proof}
Using the property $\Gamma(z+1)=z\Gamma(z)$ \cite[page 24]{Kilbas} of the Euler gamma function we have
\begin{align*}
\frac{d}{dx}\left(E_{\alpha,1}\left(i\lambda \phi(x)\right)\right)&=\frac{d}{dx}\sum\limits_{j=0}^\infty i^j\lambda^{j} \frac{\phi^j(x)}{\Gamma(\alpha j+1)}\\&=\phi'(x)\sum\limits_{j=1}^\infty i^j\lambda^{j}\frac{ j\phi^{j-1}(x)}{\Gamma(\alpha j+1)}\\& =\frac{\phi'(x)}{\alpha}\sum\limits_{j=1}^\infty \frac{i^j\lambda^{j} \phi^{j-1}(x)}{\Gamma(\alpha j)}\\&=\frac{i\lambda\phi'(x)}{\alpha}\sum\limits_{j=0}^\infty \frac{i^j\lambda^{j} \phi^{j}(x)}{\Gamma(\alpha j+\alpha)} \\&=\frac{i\lambda\phi'(x)}{\alpha}E_{\alpha,\alpha}\left(i\lambda \phi(x)\right).
\end{align*} Hence we get \eqref{ML1}. The proof is complete.
\end{proof}

\section{The generalisations of the van der Corput first lemma}
In this section we consider the integral operator defined by
\begin{equation}\label{1}I_{\alpha,\beta}(\lambda)=\int\limits_a^bE_{\alpha,\beta}\left(i\lambda \phi(x)\right)\psi(x)dx,\end{equation}
where $0<\alpha\leq 1,\,\beta>0,$ $\phi$ is a phase and $\psi$ is an amplitude, and $\lambda$ is a positive real number that can vary.
We are interested in particular in the behavior of $I_{\alpha,\beta}(\lambda)$ when $\lambda$ is large, as for small $\lambda$ the integral is just bounded.
\begin{theorem}\label{th1} Let $-\infty\leq a<b\leq+\infty.$ Let $\phi:[a,b]\rightarrow \mathbb{R}$ be a measurable function and let $\psi\in L^1[a,b].$ If $0<\alpha<1,\,\beta>0,$ and $m=\operatorname{ess\,inf}\limits_{x\in[a,b]}|\phi(x)|> 0,$ then we have the estimate
\begin{equation}\label{2}
|I_{\alpha,\beta}(\lambda)|\leq \frac{M\|\psi\|_{L^1[a,b]}}{1+m\lambda},\,\lambda\geq 0,
\end{equation} where $M$ does not depend on $\phi,$ $\psi$ and $\lambda.$
\end{theorem}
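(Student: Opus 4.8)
The plan is to bound the integrand pointwise by means of the sectorial decay estimate \eqref{MLAsym} and then integrate trivially against $\psi\in L^1$. Writing
\[
|I_{\alpha,\beta}(\lambda)|\le\int\limits_a^b\bigl|E_{\alpha,\beta}(i\lambda\phi(x))\bigr|\,|\psi(x)|\,dx ,
\]
the whole problem reduces to showing that for a.e.\ $x\in[a,b]$ and all $\lambda>0$ one has $\bigl|E_{\alpha,\beta}(i\lambda\phi(x))\bigr|\le C/(1+\lambda|\phi(x)|)$ with $C$ depending only on $\alpha$ and $\beta$, after which the hypothesis $|\phi(x)|\ge m$ a.e.\ upgrades this to $C/(1+m\lambda)$ and the $L^1$-integration finishes the proof.

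The one point that needs genuine care — and the step I expect to be the main obstacle — is verifying that the complex argument $z=i\lambda\phi(x)$ lands in the sector where \eqref{MLAsym} is valid. Since $\phi$ is real-valued and, by assumption, $\phi(x)\ne0$ for a.e.\ $x$, for $\lambda>0$ we have $\arg\bigl(i\lambda\phi(x)\bigr)=\pm\pi/2$, hence $|\arg z|=\pi/2$. Thus \eqref{MLAsym} applies provided we can choose $\mu$ with $\pi\alpha/2<\mu<\min\{\pi,\pi\alpha\}=\pi\alpha$ and $\mu\le\pi/2$. If $0<\alpha\le 1/2$ then $\pi\alpha\le\pi/2$, so any $\mu\in(\pi\alpha/2,\pi\alpha)$ works; if $1/2<\alpha<1$ then $\pi\alpha/2<\pi/2<\pi\alpha$, so $\mu=\pi/2$ is admissible. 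In either case such a $\mu=\mu(\alpha)$ exists precisely because $\alpha<1$ (at $\alpha=1$ one would need $\mu>\pi\alpha/2=\pi/2$ and $\mu\le\pi/2$ simultaneously, which is impossible — consistent with $E_{1,1}(z)=e^z$ not decaying), and this is what forces the strict inequality $\alpha<1$ in the statement. Fixing this $\mu$, the proposition behind \eqref{MLAsym} yields a constant $C=C(\alpha,\beta)$, independent of $\phi$, $\psi$ and $\lambda$, with $|E_{\alpha,\beta}(z)|\le C/(1+|z|)$ for all $z$ with $|\arg z|\ge\mu$.

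With this in hand, for a.e.\ $x$ and all $\lambda>0$,
\[
\bigl|E_{\alpha,\beta}(i\lambda\phi(x))\bigr|\le\frac{C}{1+\lambda|\phi(x)|}\le\frac{C}{1+m\lambda},
\]
so $|I_{\alpha,\beta}(\lambda)|\le \dfrac{C\,\|\psi\|_{L^1[a,b]}}{1+m\lambda}$ for every $\lambda>0$. Finally the case $\lambda=0$ is handled directly: $E_{\alpha,\beta}(0)=1/\Gamma(\beta)$ gives $|I_{\alpha,\beta}(0)|\le\|\psi\|_{L^1[a,b]}/\Gamma(\beta)$, which is of the required form since $1+m\lambda=1$. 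Taking $M=\max\{C,\,1/\Gamma(\beta)\}$, which depends only on $\alpha$ and $\beta$, we obtain \eqref{2} for all $\lambda\ge0$. The remaining computations (the triangle inequality for integrals and the elementary monotonicity $1+\lambda|\phi(x)|\ge 1+m\lambda$) are entirely routine.
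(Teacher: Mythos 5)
Your proof is correct, and it takes a slightly different route from the paper at the one step that matters. The paper never applies the sectorial bound \eqref{MLAsym} to $z=i\lambda\phi(x)$ directly; instead it first invokes the fractional Euler formula \eqref{FEuler} to write $E_{\alpha,\beta}(i\lambda\phi(x))=E_{2\alpha,\beta}(-\lambda^{2}\phi^{2}(x))+i\lambda\phi(x)E_{2\alpha,\alpha+\beta}(-\lambda^{2}\phi^{2}(x))$, and then applies \eqref{MLAsym} to each term, where the argument $-\lambda^{2}\phi^{2}(x)$ lies on the negative real axis so the sector condition is automatic for the orders $2\alpha\in(0,2)$; this yields the intermediate bound \eqref{5}, namely $|E_{\alpha,\beta}(i\lambda\phi(x))|\le C\frac{1+\lambda|\phi(x)|}{1+\lambda^{2}\phi^{2}(x)}\le \frac{2C}{1+\lambda|\phi(x)|}$, after which the argument coincides with yours ($|\phi|\ge m$ a.e.\ and $L^{1}$ integration of $\psi$). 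You instead use \eqref{MLAsym} on the imaginary axis, and you correctly identify and resolve the only delicate point: since $|\arg(i\lambda\phi(x))|=\pi/2$, one must check that an admissible $\mu$ with $\pi\alpha/2<\mu<\pi\alpha$ and $\mu\le\pi/2$ exists, which holds precisely because $\alpha<1$ (your case split $\alpha\le 1/2$ versus $1/2<\alpha<1$ is right, and your remark that this fails at $\alpha=1$ matches why the theorem excludes it). What each route buys: yours is shorter and avoids the Euler-type decomposition altogether, at the price of the boundary-of-sector verification; the paper's route sidesteps that verification by moving the argument to $\arg z=\pi$, and the decomposition \eqref{FEuler} together with the bound \eqref{5} is reused repeatedly in later theorems (e.g.\ Theorems \ref{th1.2}, \ref{th1.3}, \ref{th4.1}), which is presumably why the authors set it up this way. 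Your treatment of $\lambda=0$ via $E_{\alpha,\beta}(0)=1/\Gamma(\beta)$ and the independence of $M$ from $\phi,\psi,\lambda$ are both handled correctly.
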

\begin{proof} As for small $\lambda$ the integral \eqref{1} is just bounded, we give the proof for $\lambda\geq 1.$ Let $\phi:[a,b]\rightarrow \mathbb{R}$ be a measurable function and $\psi\in L^1[a,b].$ Then
\begin{align*}
|I_{\alpha,\beta}(\lambda)|&=\left|\int\limits_a^bE_{\alpha,\beta}\left(i\lambda \phi(x)\right)\psi(x)dx\right|\\& \leq \int\limits_a^b\left|E_{\alpha,\beta}\left(i\lambda \phi(x)\right)\right|\left|\psi(x)\right|dx.
\end{align*}
Using formula \eqref{FEuler} and estimate \eqref{MLAsym} we have that
\begin{equation}\label{5}\begin{split}
\left|E_{\alpha,\beta}\left(i\lambda \phi(x)\right)\right|&\leq \left|E_{2\alpha,\beta}\left(-\lambda^{2} \phi^2(x)\right)\right|+ \lambda|\phi(x)|\left|E_{2\alpha,\alpha+\beta}\left(-\lambda^{2} \phi^2(x)\right)\right|\\&\leq \frac{C}{1+\lambda^{2} \phi^2(x)}+\frac{C\lambda|\phi(x)|}{1+\lambda^{2} \phi^2(x)}\\&\leq C\frac{1+\lambda|\phi(x)|}{1+\lambda^{2} \phi^2(x)}.\end{split}\end{equation}
As $\phi$ and $\psi$ do not depend on $\lambda,$ and $m=\operatorname{ess\,inf}\limits_{x\in[a,b]}|\phi(x)|> 0,$
then from \eqref{5} and using $2(1+y^2)\geq (1+y)^2$ we have
\begin{align*}
|I_{\alpha,\beta}(\lambda)|&\leq \int\limits_a^b\left|E_{\alpha,\beta}\left(i\lambda \phi(x)\right)\right|\left|\psi(x)\right|dx\\& \leq C\int\limits_a^b\frac{1+\lambda|\phi(x)|}{1+\lambda^{2} \phi^2(x)}\left|\psi(x)\right|dx\\&\leq 2C\int\limits_a^b\frac{1+\lambda|\phi(x)|}{(1+\lambda |\phi(x)|)^2}\left|\psi(x)\right|dx\\& \leq C_1\int\limits_a^b\frac{\left|\psi(x)\right|}{1+\lambda |\phi(x)|}dx\\&\leq \frac{C_1}{1+\lambda \operatorname{ess\,inf}\limits_{x\in[a,b]}|\phi(x)|}\int\limits_a^b\left|\psi(x)\right|dx\\& \leq \frac{M\|\psi\|_{L^1[a,b]}}{1+m\lambda}.\end{align*}
The proof is complete.
\end{proof}

If $\phi$ can vanish at the some point $c\in[a,b],$ we have weaker decay rate. To handle this case, we require some regularity of $\phi.$
\begin{theorem}\label{th1.2} Let $-\infty\leq a<b\leq+\infty$ and $0<\alpha<1,\,\beta>0.$ Let $\phi\in L^\infty[a,b]$ be a real-valued differentiable function on $[a,b]$ vanishing at the some point $c\in(a,b).$ Let $\psi\in C_0[a,b],$ and let $m=\inf\limits_{x\in\text{supp}(\psi)}|\phi'(x)|> 0,$ then
\begin{equation}\label{1.3}
|I_{\alpha,\beta}(\lambda)|\leq {M\|\psi\|_{C_0[a,b]}}\frac{\log(2+\|\phi\|_{L^\infty[a,b]}\lambda)} {1+m\lambda},\,\lambda\geq 0,
\end{equation} where $M$ is a constant independent of $\phi,$ $\psi$ and $\lambda.$
\end{theorem}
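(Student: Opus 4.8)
The plan is to combine the pointwise bound on $E_{\alpha,\beta}$ already obtained in the proof of Theorem~\ref{th1} with a change of variables that uses the monotonicity of $\phi$ and the lower bound $|\phi'|\ge m$. As before, since \eqref{1} is bounded for $\lambda$ in a bounded set, I would assume $\lambda\ge 1$. Then exactly the computation \eqref{5} (the fractional Euler formula \eqref{FEuler} together with the asymptotics \eqref{MLAsym}) gives the pointwise estimate $\left|E_{\alpha,\beta}(i\lambda\phi(x))\right|\le C\,\dfrac{1+\lambda|\phi(x)|}{1+\lambda^2\phi^2(x)}$, whence
\[
|I_{\alpha,\beta}(\lambda)|\le C\|\psi\|_{L^\infty[a,b]}\int_a^b\frac{1+\lambda|\phi(x)|}{1+\lambda^2\phi^2(x)}\,dx .
\]
So the whole problem reduces to bounding the purely real quantity $J(\lambda):=\int_a^b\frac{1+\lambda|\phi(x)|}{1+\lambda^2\phi^2(x)}\,dx$ by a constant multiple of $\frac{\log(2+\|\phi\|_{L^\infty}\lambda)}{m\lambda}$.

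For this I would change variables $t=\phi(x)$. Since $|\phi'|\ge m>0$, the function $\phi$ is strictly monotone (WLOG increasing), so it is a homeomorphism onto its image with $\phi^{-1}$ Lipschitz of constant $1/m$; in particular $\phi$ has at most one zero, and the assumption on the zeros only serves to rule out a constant $\phi$. The substitution is therefore legitimate, $\left|\frac{dx}{dt}\right|=\frac{1}{\phi'(x)}\le\frac1m$, and $t$ ranges over a subinterval of $[-\|\phi\|_{L^\infty},\|\phi\|_{L^\infty}]$, so
\[
J(\lambda)\le\frac1m\int_{-\|\phi\|_{L^\infty}}^{\|\phi\|_{L^\infty}}\frac{1+\lambda|t|}{1+\lambda^2t^2}\,dt
=\frac2m\left[\frac1\lambda\arctan\!\bigl(\lambda\|\phi\|_{L^\infty}\bigr)+\frac1{2\lambda}\log\!\bigl(1+\lambda^2\|\phi\|_{L^\infty}^2\bigr)\right].
\]
Bounding $\arctan\le\pi/2$ and $\log(1+\lambda^2\|\phi\|_{L^\infty}^2)\le 2\log(1+\lambda\|\phi\|_{L^\infty})\le 2\log(2+\lambda\|\phi\|_{L^\infty})$ yields $J(\lambda)\le\frac{C'}{m\lambda}\log(2+\lambda\|\phi\|_{L^\infty})$, and since $\lambda\ge 1$ this is $\lesssim\frac{\log(2+\lambda\|\phi\|_{L^\infty})}{1+m\lambda}$; together with the trivial bound for bounded $\lambda$ this gives \eqref{1.3}. (Equivalently, one may split $[a,b]$ at the zero, keep $|E_{\alpha,\beta}|\le C_1$ on the part where $\lambda|\phi|\le 1$, whose measure is $\le 2/(m\lambda)$ by monotonicity, and use $|E_{\alpha,\beta}|\le C_1/(\lambda|\phi|)$ elsewhere, the change of variables then producing $\int_{1/\lambda}^{\|\phi\|_{L^\infty}}dt/t$.)

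The step I expect to be the crux is the appearance of the logarithm, i.e.\ the term $\int_a^b\frac{\lambda|\phi(x)|}{1+\lambda^2\phi^2(x)}\,dx$, which after the substitution becomes $\int_0^{\|\phi\|_{L^\infty}}\frac{\lambda t}{1+\lambda^2 t^2}\,dt=\frac1{2\lambda}\log(1+\lambda^2\|\phi\|_{L^\infty}^2)$: this logarithmic loss is genuine and is exactly why the estimate is weaker than in Theorem~\ref{th1}, reflecting the absence of a clean cancellation/integration‑by‑parts identity for $E_{\alpha,\beta}$ when $\beta\neq 1$ (unlike $E_{1,1}$, one does not have $\tfrac{d}{dx}E_{\alpha,\beta}(i\lambda\phi)=i\lambda\phi'E_{\alpha,\beta}(i\lambda\phi)$; compare the special identity \eqref{ML1}). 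A secondary technical point is to justify the change of variables under mere differentiability of $\phi$, which is handled by the absolute continuity of $\phi^{-1}$ noted above, or by first running the argument on each maximal subinterval on which $\phi$ keeps a fixed sign and then summing over the finitely many such pieces determined by the zeros $\{c_j\}$.
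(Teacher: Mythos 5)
Your proposal is correct and takes essentially the same route as the paper's proof: the pointwise bound \eqref{5} obtained from \eqref{FEuler} and \eqref{MLAsym}, then pulling out $\|\psi\|_{L^\infty}$ and changing variables $t=\phi(x)$ with $\left|\frac{dx}{dt}\right|\le \frac1m$, which reduces everything to the elementary integral $\int\frac{1+\lambda|t|}{1+\lambda^{2}t^{2}}\,dt$ that produces the logarithm. The only cosmetic difference is that you evaluate this integral directly over $[-\|\phi\|_{L^\infty},\|\phi\|_{L^\infty}]$ (arctan plus log), whereas the paper makes a second substitution $u=\lambda t$ and argues by cases on the location of the (single) zero of $\phi$; your remark that strict monotonicity forces at most one zero, and your justification of the substitution via the Lipschitz inverse, are consistent with (indeed slightly more careful than) the paper's argument.
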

\begin{proof} Since $I_{\alpha,\beta}(\lambda)$ is bounded for small $\lambda,$ we can assume that $\lambda\geq 1.$

If $a=-\infty,$ we understand $C[a,b]$ as $C(-\infty, b],$ if $b=+\infty,$ we understand $C[a,b]$ as $C[a,+\infty).$ Similarly, if $a=-\infty$ and $b=+\infty,$ we understand $C[a,b]$ as $C(\mathbb{R}).$

Let $$\text{supp}(\psi):=[a_1,b_1]\subset(a,b),$$ where $-\infty<a_1<b_1<+\infty.$
Without loss of generality, suppose that $c\in(a_1,b_1).$ Since $\psi\in C_0[a,b],$ we have
\begin{align*}
|I_{\alpha,\beta}(\lambda)|&\leq \|\psi\|_{C_0[a,b]}\int\limits_{a_1}^{b_1}\left|E_{\alpha,\beta}\left(i\lambda \phi(x)\right)\right|dx.
\end{align*}
From \eqref{5} it follows that
\begin{equation*}|I_{\alpha,\beta}(\lambda)|\leq M\|\psi\|_{C_0[a,b]}\int\limits_{a_1}^{b_1}\frac{1+\lambda|\phi(x)|}{1+\lambda^{2} \phi^2(x)}dx.\end{equation*}
Here and in what follows, we assume that $M$ is an arbitrary constant independent of $\lambda.$

Without loss of generality we can assume that $\phi$ is nondecreasing, then $\phi\leq 0,\,x\in[a_1,c]$ and $\phi\geq 0,\,x\in[c,b_1]$. Since $\phi$ is a bounded function, we get
\begin{align*}|I_{\alpha,\beta}(\lambda)|&\leq M\|\psi\|_{C_0[a,b]}\int\limits_{a_1}^{b_1}\frac{1+\lambda|\phi(x)|}{1+\lambda^{2} \phi^2(x)}dx \\& \leq 2M\|\psi\|_{C_0[a,b]}\left[\int\limits_{a_1}^{c}\frac{1-\lambda\phi(x)}{(1+\lambda \phi(x))^2}dx+ \int\limits_{c}^{b_1}\frac{1+\lambda\phi(x)}{(1+\lambda \phi(x))^2}dx\right] \\& \leq M\|\psi\|_{C_0[a,b]}\left[\underbrace{\int\limits_{a_1}^{c}\frac{1}{1-\lambda \phi(x)}dx}_{I_1}+\underbrace{\int\limits_{c}^{b_1}\frac{1}{1+\lambda \phi(x)}dx}_{I_2}\right],\end{align*} thanks to the $2(1+a^2)\geq (1+a)^2,\,a\geq 0.$ Here $M>0$ is the arbitrary constant.

As $\phi$ is the differentiable function with $m=\inf\limits_{x\in[a,b]}|\phi'(x)|> 0,$ we have that
\begin{align*}I_2&=\int\limits_{c}^{b_1}\frac{1}{1+\lambda \phi(x)}dx= \frac{1}{\lambda}\int\limits_{c}^{b_1}\frac{1}{\phi'(x)}\frac{\lambda\phi'(x)}{1+\lambda \phi(x)}dx\\&\leq \frac{1}{m\lambda}\int\limits_{c}^{b_1}\frac{d(1+\lambda\phi(x))}{1+\lambda \phi(x)}=\frac{1}{m\lambda}\log(1+\lambda\phi(b_1)).\end{align*} Similarly for $I_1$, we obtain
\begin{align*}I_1=\frac{1}{m\lambda}\log(1-\lambda\phi(a)).\end{align*}
Combining the above estimates we get \eqref{1.3}.

The cases $c=a_1$ and $c=b_1$ can be proven in a similar way. The proof is complete.
\end{proof}

\subsection{The case $E_{\alpha,\alpha}$}
In this section we are interested in a particular case of the integral operator \eqref{1}, when $0<\alpha<1,\,\beta=\alpha,$ that is, the integral operator $I_{\alpha,\alpha}(\lambda).$ For smoother $\phi$ and $\psi$ we get a better estimate than \eqref{2}.

\begin{theorem}\label{th1.3} Let $0<\alpha<1$ and $-\infty< a<b<+\infty.$ Let $\phi\in C^1[a,b]$ be a real-valued function, $\phi'$ monotonic, and let $\psi'\in L^1[a,b].$\\
\begin{description}
  \item[(i)] If $\phi'(x)\geq 1$ for all $x\in [a,b],$ then we have
\begin{equation}\label{2.1}
|I_{\alpha,\alpha}(\lambda)|\leq M_1\left[|\psi(b)|+\int\limits_a^b|\psi'(x)|dx\right](1+\lambda)^{-1},\,\,\,\lambda\geq 0,
\end{equation} where $M_1$ does not depend on $\phi,$ $\psi$ and $\lambda.$
  \item[(ii)] If $m=\inf\limits_{x\in[a,b]}|\phi(x)|> 0$ and $\phi'(x)\geq 1$ for all $x\in [a,b],$ then we have
\begin{equation}\label{2.2}
|I_{\alpha,\alpha}(\lambda)|\leq M_2\left[|\psi(b)|+\int\limits_a^b|\psi'(x)|dx\right](1+\lambda)^{-1}(1+m\lambda)^{-1},\,\,\,\lambda\geq 0,
\end{equation} where $M_2$ does not depend on $\phi,$ $\psi$ and $\lambda.$
\end{description}
\end{theorem}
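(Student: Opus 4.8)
The plan is to use the differentiation identity \eqref{ML1} (legitimate here since $\phi'(x)\geq 1>0$) to trade one power of $\lambda$ for a derivative of a Mittag-Leffler function with second index $1$, and then to integrate by parts. Since the integral in \eqref{1} is bounded for $0\le\lambda\le 1$ — by $C\|\psi\|_{L^\infty}(b-a)$ in case (i), using that $\psi$ is absolutely continuous so $|\psi(x)|\le |\psi(b)|+\int_a^b|\psi'(t)|\,dt$ for all $x$; and by $\frac{C(b-a)}{1+m\lambda}\|\psi\|_{L^\infty}$ in case (ii), using $|E_{\alpha,\alpha}(i\lambda\phi(x))|\le \frac{C}{1+\lambda|\phi(x)|}\le\frac{C}{1+m\lambda}$, which follows from \eqref{MLAsym} because $i\lambda\phi(x)$ has $|\arg|=\pi/2$ and, as $0<\alpha<1$, one may choose an admissible $\mu$ with $\pi\alpha/2<\mu\le\pi/2$ — it suffices to establish \eqref{2.1}, \eqref{2.2} for $\lambda\geq 1$.

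Fix $\lambda\geq 1$ and set $G(x):=E_{\alpha,1}(i\lambda\phi(x))$, which is $C^1$ on $[a,b]$. By \eqref{ML1},
\begin{equation*}
I_{\alpha,\alpha}(\lambda)=\int_a^b E_{\alpha,\alpha}(i\lambda\phi(x))\,\psi(x)\,dx=\frac{\alpha}{i\lambda}\int_a^b\frac{\psi(x)}{\phi'(x)}\,dG(x).
\end{equation*}
Next I would integrate by parts in the Riemann--Stieltjes sense: $G$ is continuous and $\psi/\phi'$ has bounded variation on $[a,b]$ — indeed $\psi\in W^{1,1}[a,b]$ and $1/\phi'$ is monotonic, because $\phi'$ is, with values in $(0,1]$, because $\phi'\geq 1$ — so
\begin{equation*}
I_{\alpha,\alpha}(\lambda)=\frac{\alpha}{i\lambda}\left[\frac{\psi(b)}{\phi'(b)}G(b)-\frac{\psi(a)}{\phi'(a)}G(a)-\int_a^b G(x)\,d\!\left(\frac{\psi(x)}{\phi'(x)}\right)\right].
\end{equation*}

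The three terms are then estimated using the pointwise bound $|G(x)|\le\frac{C}{1+\lambda|\phi(x)|}$, obtained from \eqref{MLAsym} as in the derivation of \eqref{5} (and $G(x)=1$ wherever $\phi(x)=0$); this gives $\sup_{x}|G(x)|\le C$ in case (i) and $\sup_{x}|G(x)|\le \frac{C}{1+m\lambda}$ in case (ii). Since $\phi'\geq 1$, the boundary terms are at most $\sup_x|G(x)|\,(|\psi(a)|+|\psi(b)|)\le \sup_x|G(x)|\,\bigl(2|\psi(b)|+\int_a^b|\psi'|\bigr)$; and $\bigl|\int_a^b G\,d(\psi/\phi')\bigr|\le \sup_x|G(x)|\cdot V_a^b(\psi/\phi')$, where, by the product inequality for total variation together with $V_a^b(1/\phi')\le 1$ and $\|1/\phi'\|_{L^\infty}\le 1$,
\begin{equation*}
V_a^b\!\left(\frac{\psi}{\phi'}\right)\le \|\psi\|_{L^\infty}\,V_a^b\!\left(\frac{1}{\phi'}\right)+\left\|\frac{1}{\phi'}\right\|_{L^\infty}V_a^b(\psi)\le \|\psi\|_{L^\infty}+\int_a^b|\psi'(x)|\,dx\le |\psi(b)|+2\int_a^b|\psi'(x)|\,dx.
\end{equation*}
Collecting these bounds, for $\lambda\geq 1$,
\begin{equation*}
|I_{\alpha,\alpha}(\lambda)|\le \frac{C\alpha}{\lambda}\,\sup_{x}|G(x)|\left[|\psi(b)|+\int_a^b|\psi'(x)|\,dx\right],
\end{equation*}
and substituting $\sup_x|G|\le C$ resp. $\sup_x|G|\le\frac{C}{1+m\lambda}$, using $\frac1\lambda\le\frac{2}{1+\lambda}$ for $\lambda\ge1$, and combining with the $0\le\lambda\le1$ bound above, yields \eqref{2.1} resp. \eqref{2.2}.

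The main difficulty here is technical rather than conceptual: because $\phi$ is only assumed $C^1$ with $\phi'$ monotonic, a second classical integration by parts is not available, so the whole argument must be carried out at the level of Riemann--Stieltjes integrals and total variation, and $\|\psi\|_{L^\infty}$ must be replaced by $|\psi(b)|+\int_a^b|\psi'|$ in order to land precisely on the right-hand sides of \eqref{2.1} and \eqref{2.2}. Once the Stieltjes integration by parts and the estimate $V_a^b(1/\phi')\le 1$ are in place, part (ii) follows from part (i) at no extra cost, simply by retaining the sharper bound $|G(x)|\le C/(1+m\lambda)$ in place of $|G(x)|\le C$.
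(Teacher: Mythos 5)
Your proof is correct and follows essentially the same route as the paper: both hinge on the identity \eqref{ML1}, one integration by parts that moves the derivative off $E_{\alpha,1}(i\lambda\phi)$, the decay bound for $E_{\alpha,1}$ on the imaginary axis (as in \eqref{5}/\eqref{MLAsym}), the monotonicity of $1/\phi'$ with $\|1/\phi'\|_{L^\infty}\le 1$, and control of $\psi$ through $|\psi(b)|+\int_a^b|\psi'|$. The only difference is organizational: the paper first bounds the primitive $E(x)=\int_a^x E_{\alpha,\alpha}(i\lambda\phi(s))\,ds$ uniformly and then integrates by parts against $\psi$, while you perform a single Riemann--Stieltjes integration by parts with the combined amplitude $\psi/\phi'$ and invoke the BV product inequality, which is a routine rearrangement of the same argument.
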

\begin{proof} For small $\lambda$ we have bounded estimate for the integral $I_{\alpha, \alpha}(\lambda)$.

Let $\lambda\geq1.$
First we consider the integral $$E(x)=\int\limits_a^xE_{\alpha,\alpha}\left(i\lambda \phi(s)\right)ds,\,x\in[a,b].$$ Then the property \eqref{ML1} and integrating by parts gives
\begin{align*}
E(x)&=\int\limits_a^xE_{\alpha,\alpha}\left(i\lambda \phi(s)\right)ds\\&=
\frac{\alpha}{i\lambda}\int\limits_a^x\frac{1}{\phi'(s)}\frac{d}{ds}\left(E_{\alpha,1}\left(i\lambda \phi(s)\right)\right)ds\\&=\frac{\alpha}{i\lambda}E_{\alpha,1}\left(i\lambda \phi(x)\right)\frac{1}{\phi'(x)}
\\&-\frac{\alpha}{i\lambda}E_{\alpha,1}\left(i\lambda \phi(a)\right)\frac{1}{\phi'(a)}\\&-\frac{\alpha}{i\lambda}\int\limits_a^xE_{\alpha,1}\left(i\lambda \phi(s)\right)\frac{d}{ds}\left(\frac{1}{\phi'(s)}\right)ds.
\end{align*}
Then for all $x\in[a,b]$ we have
\begin{align*}
|E(x)|&\leq\frac{\alpha}{\lambda}\left|\int\limits_a^xE_{\alpha,1}\left(i\lambda \phi(s)\right)\frac{d}{ds}\left(\frac{1}{\phi'(s)}\right)ds\right|\\& +\frac{\alpha}{\lambda}\left|E_{\alpha,1}\left(i\lambda \phi(x)\right)\frac{1}{\phi'(x)}\right|
+\frac{\alpha}{\lambda}\left|E_{\alpha,1}\left(i\lambda \phi(a)\right)\frac{1}{\phi'(a)}\right|.
\end{align*}
We first prove case (ii). Since $\phi'$ is monotonic and $\phi'(x)\geq 1$ for all $x\in[a,b]$, then $\frac{1}{\phi'}$ is also monotonic,
and $\frac{d}{dx}\frac{1}{\phi'(x)}$ has a fixed sign. Therefore, using estimate \eqref{5} we have
\begin{align*}
|E(x)|&\leq\frac{\alpha}{\lambda}\int\limits_a^x\left|E_{\alpha,1}\left(i\lambda \phi(s)\right)\right|\left|\frac{d}{ds}\left(\frac{1}{\phi'(s)}\right)\right|ds\\& +\frac{\alpha}{\lambda}\left|E_{\alpha,1}\left(i\lambda \phi(x)\right)\right|\frac{1}{|\phi'(x)|}
+\frac{\alpha}{\lambda}\left|E_{\alpha,1}\left(i\lambda \phi(a)\right)\right|\frac{1}{|\phi'(a)|}\\&\leq \frac{C\alpha}{\lambda}\int\limits_a^x\frac{1+\lambda|\phi(s)|}{1+\lambda^{2} \phi^2(s)}\left|\frac{d}{ds}\left(\frac{1}{\phi'(s)}\right)\right|ds \\& +\frac{C\alpha}{\lambda}\frac{1+\lambda|\phi(x)|}{1+\lambda^{2} \phi^2(x)}+\frac{C\alpha}{\lambda}\frac{1+\lambda|\phi(a)|}{1+\lambda^{2} \phi^2(a)} \\&\leq \frac{2C\alpha}{\lambda}\int\limits_a^x\frac{1}{1+\lambda |\phi(s)|}\left|\frac{d}{ds}\left(\frac{1}{\phi'(s)}\right)\right|ds \\& +\frac{2C\alpha}{\lambda}\frac{1}{1+\lambda |\phi(x)|}+\frac{2C\alpha}{\lambda}\frac{1}{1+\lambda |\phi(a)|} \\&\leq \frac{2C\alpha}{\lambda(1+m\lambda)}\left[2+\int\limits_a^x\left|\frac{d}{ds}\left(\frac{1}{\phi'(s)}\right)\right|ds\right] \\&= \frac{2C\alpha}{\lambda(1+m\lambda)}\left[2+\left|\int\limits_a^x\frac{d}{ds}\left(\frac{1}{\phi'(s)}\right)ds\right|\right] =\frac{8C\alpha}{\lambda(1+m\lambda)},
\end{align*} where $m=\inf\limits_{x\in[a,b]}|\phi(x)|.$

Consequently, thanks to $\frac{1}{\lambda}\leq \frac{2}{1+\lambda}$ for $\lambda\geq 1$ we have
\begin{equation}\label{E}
|E(x)|\leq \frac{M_1}{(1+\lambda)(1+m\lambda)},\,\,\,\lambda\geq 1,
\end{equation}
where the constant $M_1$ does not depend on $\phi$ and $\lambda.$

Now, we write $I_{\alpha,\alpha}(\lambda)$ as $$I_{\alpha,\alpha}(\lambda)=\int\limits_a^b \frac{d}{dx}(E(x))\psi(x)dx.$$ Integrating by parts and using the estimate \eqref{E} we obtain
\begin{equation*}|I_{\alpha,\alpha}(\lambda)|\leq \frac{M_1}{(1+\lambda)(1+m\lambda)}\left[|\psi(b)|+\int\limits_a^b|\psi'(x)|dx\right].
\end{equation*}
If $\inf\limits_{x\in[a,b]}|\phi(x)|\geq 0,$ then we have
\begin{equation*}|I_{\alpha,\alpha}(\lambda)|\leq \frac{M_1}{1+\lambda}\left[|\psi(b)|+\int\limits_a^b|\psi'(x)|dx\right].
\end{equation*}
If $\inf\limits_{x\in[a,b]}|\phi(x)|>0,$ then we have
\begin{equation*}|I_{\alpha,\alpha}(\lambda)|\leq \frac{M_2}{(1+\lambda)(1+m\lambda)}\left[|\psi(b)|+\int\limits_a^b|\psi'(x)|dx\right],
\end{equation*} where the constant $M_2$ does not depend on $\phi$ and $\lambda.$
\end{proof}

\begin{theorem}\label{th1.3+} Let $0<\alpha<1$ and $-\infty< a<b<+\infty.$ Let $\phi\in C^2[a,b]$ is a real-valued function and $\psi\in C^1[a,b].$\\
\begin{description}
  \item[(i)] If $\phi'(x)\neq 0$ for all $x\in [a,b],$ then we have
\begin{equation}\label{2.1+}
|I_{\alpha,\alpha}(\lambda)|\leq M_1\left[\left\|\left(\frac{\psi}{\phi'}\right)'\right\|_{L^\infty[a,b]}+\left|\frac{\psi(b)}{\phi'(b)}\right| +\left|\frac{\psi(a)}{\phi'(a)}\right|\right](1+\lambda)^{-1},\,\,\,\lambda\geq 0,
\end{equation} where $M_1$ does not depend on $\lambda.$
\item[(ii)] If $\phi'(x)\neq 0$ for all $x\in [a,b]$ and $\psi(a)=\psi(b)=0,$ then we have
\begin{equation}\label{2.1++}
|I_{\alpha,\alpha}(\lambda)|\leq M_2\left\|\left(\frac{\psi}{\phi'}\right)'\right\|_{L^\infty[a,b]}\frac{\log(2+\lambda)}{(1+\lambda)^{2}},\,\,\,\lambda\geq 0,
\end{equation} where $M_2$ does not depend on $\lambda.$
  \item[(iii)] If $\phi(x)\neq 0$ and $\phi'(x)\neq 0$ for all $x\in [a,b],$ then we have
\begin{equation}\label{2.2+}
|I_{\alpha,\alpha}(\lambda)|\leq M_3\left[\left\|\left(\frac{\psi}{\phi'}\right)'\right\|_{L^\infty}+\left|\frac{\psi(b)}{\phi'(b)}\right| +\left|\frac{\psi(a)}{\phi'(a)}\right|\right](1+\lambda)^{-2},\,\,\,\lambda\geq 0,
\end{equation} where $M_3$ does not depend on $\lambda.$
\end{description}
\end{theorem}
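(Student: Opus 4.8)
The plan is to reduce all three estimates to a single integration by parts based on identity \eqref{ML1}, after which the three cases differ only in which pointwise bound on $E_{\alpha,1}$ is invoked. Since $I_{\alpha,\alpha}(\lambda)$ is bounded for $\lambda$ in a bounded set, I would fix $\lambda\geq 1$ throughout and recover the powers of $(1+\lambda)$ at the end via $\lambda^{-1}\leq 2(1+\lambda)^{-1}$.

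\emph{Common first step.} Because $\phi\in C^2[a,b]$ with $\phi'\neq 0$, formula \eqref{ML1} gives $E_{\alpha,\alpha}(i\lambda\phi(x))=\frac{\alpha}{i\lambda\phi'(x)}\frac{d}{dx}E_{\alpha,1}(i\lambda\phi(x))$, so that
$$I_{\alpha,\alpha}(\lambda)=\frac{\alpha}{i\lambda}\int_a^b\frac{\psi(x)}{\phi'(x)}\,\frac{d}{dx}\left(E_{\alpha,1}(i\lambda\phi(x))\right)dx.$$
Since $\psi\in C^1[a,b]$ and $\phi'\in C^1[a,b]$ is nonvanishing, $\psi/\phi'\in C^1[a,b]$, and integration by parts gives
$$I_{\alpha,\alpha}(\lambda)=\frac{\alpha}{i\lambda}\left[\frac{\psi}{\phi'}E_{\alpha,1}(i\lambda\phi)\right]_a^b-\frac{\alpha}{i\lambda}\int_a^bE_{\alpha,1}(i\lambda\phi(x))\left(\frac{\psi}{\phi'}\right)'(x)\,dx.$$
The only facts about $E_{\alpha,1}$ that I need are consequences of \eqref{5} with $\beta=1$: the crude bound $|E_{\alpha,1}(i\lambda\phi(x))|\leq C$ and the decaying bound $|E_{\alpha,1}(i\lambda\phi(x))|\leq \frac{2C}{1+\lambda|\phi(x)|}$, both obtained from $\frac{1+t}{1+t^2}\leq\frac{2}{1+t}$ with $t=\lambda|\phi(x)|$.

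\emph{Cases (i) and (iii).} For (i), bound every $E_{\alpha,1}$ in the identity by $C$: the boundary term is at most $\frac{\alpha C}{\lambda}\left(|\psi(b)/\phi'(b)|+|\psi(a)/\phi'(a)|\right)$ and the integral term at most $\frac{\alpha C(b-a)}{\lambda}\|(\psi/\phi')'\|_{L^\infty}$, and $\lambda^{-1}\leq 2(1+\lambda)^{-1}$ yields \eqref{2.1+}. For (iii), with $m=\inf_{[a,b]}|\phi|>0$ one has $|E_{\alpha,1}(i\lambda\phi(x))|\leq\frac{2C}{1+m\lambda}$ uniformly in $x$, so both terms acquire the extra factor $(1+m\lambda)^{-1}$, giving $|I_{\alpha,\alpha}(\lambda)|\lesssim\frac{1}{\lambda(1+m\lambda)}\left[\|(\psi/\phi')'\|_{L^\infty}+|\psi(b)/\phi'(b)|+|\psi(a)/\phi'(a)|\right]$; since $\frac{1}{\lambda(1+m\lambda)}\leq\frac{1}{m\lambda^2}\leq\frac{4}{m(1+\lambda)^2}$ for $\lambda\geq 1$, this is \eqref{2.2+}.

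\emph{Case (ii), the main point.} Here $\psi(a)=\psi(b)=0$ annihilates the boundary term, leaving $|I_{\alpha,\alpha}(\lambda)|\leq\frac{2C\alpha}{\lambda}\|(\psi/\phi')'\|_{L^\infty}\int_a^b\frac{dx}{1+\lambda|\phi(x)|}$. The only genuinely new estimate — and what I expect to be the main obstacle — is to show $\int_a^b\frac{dx}{1+\lambda|\phi(x)|}\lesssim\frac{\log(2+\lambda)}{\lambda}$. Since $\phi'\neq 0$ on the compact interval, $\phi$ is a strictly monotonic $C^1$-diffeomorphism onto $[\phi(a),\phi(b)]\subseteq[-\|\phi\|_{L^\infty},\|\phi\|_{L^\infty}]$ with $|\phi'|\geq m_0>0$; the substitution $y=\phi(x)$ then gives
$$\int_a^b\frac{dx}{1+\lambda|\phi(x)|}\leq\frac{1}{m_0}\int_{-\|\phi\|_{L^\infty}}^{\|\phi\|_{L^\infty}}\frac{dy}{1+\lambda|y|}=\frac{2}{m_0\lambda}\log\left(1+\lambda\|\phi\|_{L^\infty}\right)\leq C'\,\frac{\log(2+\lambda)}{\lambda}$$
for $\lambda\geq 1$, with $C'$ independent of $\lambda$. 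Combining with the prefactor $\lambda^{-1}$ and $\lambda^{-2}\leq 4(1+\lambda)^{-2}$ produces \eqref{2.1++}. Note that this argument uses only $\phi'\neq 0$, not $\phi\neq 0$; if $\phi$ has no zero one recovers the sharper rate of case (iii), so the three cases form a consistent hierarchy.
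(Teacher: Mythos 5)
Your proof is correct and follows essentially the same route as the paper: the identity \eqref{ML1} plus integration by parts, then the pointwise bound \eqref{5} for $E_{\alpha,1}$, with the logarithmic gain in case (ii) coming from the same change of variables $y=\phi(x)$ that the paper imports from the proof of Theorem \ref{th1.2}. The only difference is cosmetic: you carry out that substitution explicitly (using that $\phi'\neq 0$ on a compact interval forces $|\phi'|\geq m_0>0$ and strict monotonicity) rather than citing the earlier proof.
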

\begin{proof} For small $\lambda$ we have bounded estimate for the integral $I_{\alpha, \alpha}(\lambda)$. Let $\lambda\geq 1,$ $\phi\in C^2[a,b]$ and $\psi\in C^1[a,b].$ Then the property \eqref{ML1} and integrating by parts in \eqref{1} gives
\begin{align*}
I_{\alpha,\alpha}(\lambda)&=\int\limits_a^bE_{\alpha,\alpha}\left(i\lambda \phi(x)\right)\psi(x)dx\\&=
\frac{\alpha}{i\lambda}\int\limits_a^b\frac{1}{\phi'(x)}\frac{d}{dx}\left(E_{\alpha,1}\left(i\lambda \phi(x)\right)\right)\psi(x)dx\\&=\frac{\alpha}{i\lambda}E_{\alpha,1}\left(i\lambda \phi(b)\right)\frac{\psi(b)}{\phi'(b)}
-\frac{\alpha}{i\lambda}E_{\alpha,1}\left(i\lambda \phi(a)\right)\frac{\psi(a)}{\phi'(a)}\\&-\frac{\alpha}{i\lambda}\int\limits_a^bE_{\alpha,1}\left(i\lambda \phi(x)\right)\frac{d}{dx}\left(\frac{\psi(x)}{\phi'(x)}\right)dx.
\end{align*}
Then
\begin{align*}
|I_{\alpha,\alpha}(\lambda)|&\leq\frac{\alpha}{\lambda}\left|\int\limits_a^bE_{\alpha,1}\left(i\lambda \phi(x)\right)\frac{d}{dx}\left(\frac{\psi(x)}{\phi'(x)}\right)dx\right|\\& +\frac{\alpha}{\lambda}\left|E_{\alpha,1}\left(i\lambda \phi(b)\right)\frac{\psi(b)}{\phi'(b)}\right|
+\frac{\alpha}{\lambda}\left|E_{\alpha,1}\left(i\lambda \phi(a)\right)\frac{\psi(a)}{\phi'(a)}\right|.
\end{align*}
Recall that $\phi\in C^2[a,b],\, \psi\in C^1[a,b]$ do not depend on $\lambda,$ and $\phi'(x)\neq 0$ for all $x\in [a,b].$
Let $M_\phi=\max\limits_{x\in[a,b]}|\phi(x)|$ and $m_\phi=\min\limits_{x\in[a,b]}|\phi(x)|.$

Case (iii). If $\phi(x)\neq 0$ for all $x\in [a,b],$ then using the estimate \eqref{5} we have
\begin{align*}
|I_{\alpha,\alpha}(\lambda)|&\leq\frac{\alpha C}{\lambda}\left[\left\|\left(\frac{\psi}{\phi'}\right)'\right\|_{L^\infty} \int\limits_a^b\frac{1+\lambda|\phi(x)|}{1+\lambda^{2} \phi^2(x)}dx +\left|\frac{\psi(b)}{\phi'(b)}\right|\frac{1+\lambda|\phi(b)|}{1+\lambda^{2} \phi^2(b)}
+\left|\frac{\psi(a)}{\phi'(a)}\right|\frac{1+\lambda|\phi(a)|}{1+\lambda^{2} \phi^2(a)}\right]\\&\leq \frac{\alpha C}{\lambda}\left[\left\|\left(\frac{\psi}{\phi'}\right)'\right\|_{L^\infty}+\left|\frac{\psi(b)}{\phi'(b)}\right| +\left|\frac{\psi(a)}{\phi'(a)}\right|\right]\frac{1+\lambda M_\phi}{1+\lambda^{2}m^2_\phi}.\end{align*}
Since $1+\mu\lambda\geq 1+\lambda,\, \textrm{when} \,\mu\geq 1$ and $1+\nu\lambda\leq 1+\lambda,\, \textrm{when} \,\nu\leq 1,$ we have
\begin{align*}
|I_{\alpha,\alpha}(\lambda)|&\leq \frac{\alpha C}{\lambda}\left[\left\|\left(\frac{\psi}{\phi'}\right)'\right\|_{L^\infty}+\left|\frac{\psi(b)}{\phi'(b)}\right| +\left|\frac{\psi(a)}{\phi'(a)}\right|\right]\frac{1+\lambda M_\phi}{1+\lambda^{2}m^2_\phi}
\\&\leq\frac{\alpha C_1}{\lambda}\left[\left\|\left(\frac{\psi}{\phi'}\right)'\right\|_{L^\infty}+\left|\frac{\psi(b)}{\phi'(b)}\right| +\left|\frac{\psi(a)}{\phi'(a)}\right|\right]\frac{1+\lambda}{1+\lambda^{2}} \\&\leq \frac{2C_1}{\lambda}\left[\left\|\left(\frac{\psi}{\phi'}\right)'\right\|_{L^\infty}+\left|\frac{\psi(b)}{\phi'(b)}\right| +\left|\frac{\psi(a)}{\phi'(a)}\right|\right]\frac{1+\lambda}{(1+\lambda)^{2}} \\&\leq M_3\left[\left\|\left(\frac{\psi}{\phi'}\right)'\right\|_{L^\infty}+\left|\frac{\psi(b)}{\phi'(b)}\right| +\left|\frac{\psi(a)}{\phi'(a)}\right|\right]\frac{1}{(1+\lambda)^2}
\end{align*} thanks to $2(1+y^2)\geq (1+y)^2.$ Therefore
\begin{align*}
|I_{\alpha,\alpha}(\lambda)|&\leq M_3\left[\left\|\left(\frac{\psi}{\phi'}\right)'\right\|_{L^\infty}+\left|\frac{\psi(b)}{\phi'(b)}\right| +\left|\frac{\psi(a)}{\phi'(a)}\right|\right](1+\lambda)^{-2},\,\,\,\lambda\geq 0,
\end{align*}
where the constant $M_3$ depends on $\phi,$ but does not depend on $\lambda.$

Case (i). Let $K_{\phi,\psi}=\left[\left\|\left(\frac{\psi}{\phi'}\right)'\right\|_{L^\infty}+\left|\frac{\psi(b)}{\phi'(b)}\right| +\left|\frac{\psi(a)}{\phi'(a)}\right|\right].$ If $\min\limits_{x\in[a,b]}|\phi(x)|=0,$ then we have
\begin{align*}
|I_{\alpha,\alpha}(\lambda)|&\leq\frac{\alpha C}{\lambda}K_{\phi,\psi}\left[\int\limits_a^b\frac{1+\lambda|\phi(x)|}{1+\lambda^{2} \phi^2(x)}dx +\frac{1+\lambda|\phi(b)|}{1+\lambda^{2} \phi^2(b)}
+\frac{1+\lambda|\phi(a)|}{1+\lambda^{2} \phi^2(a)}\right]\\&\leq \frac{\alpha C_1}{\lambda}K_{\phi,\psi}\left[\int\limits_a^b\frac{1+\lambda|\phi(x)|}{(1+\lambda |\phi(x)|)^2}dx +\frac{1+\lambda|\phi(b)|}{1+\lambda^{2} \phi^2(b)}
+\frac{1+\lambda|\phi(a)|}{1+\lambda^{2} \phi^2(a)}\right] \\&\leq \frac{\alpha C_1}{\lambda}K_{\phi,\psi}\left[\int\limits_a^b\frac{dx}{1+\lambda |\phi(x)|} +\frac{1+\lambda|\phi(b)|}{1+\lambda^{2} \phi^2(b)}
+\frac{1+\lambda|\phi(a)|}{1+\lambda^{2} \phi^2(a)}\right] \\&\leq \frac{\alpha C_2}{\lambda}K_{\phi,\psi}\left[1 +\frac{1+\lambda|\phi(b)|}{1+\lambda^{2} \phi^2(b)}
+\frac{1+\lambda|\phi(a)|}{1+\lambda^{2} \phi^2(a)}\right]\leq K_{\phi,\psi}\frac{M_1}{1+\lambda},
\end{align*} where the constant $M_1$ depends on $\phi,$ but not on $\lambda.$

Case (ii). If $\phi'(x)\neq 0$ for all $x\in [a,b]$ and $\psi(a)=\psi(b)=0,$ then using the estimate \eqref{5} we have
\begin{align*}
|I_{\alpha,\alpha}(\lambda)|&\leq\frac{\alpha C}{\lambda}\left\|\left(\frac{\psi}{\phi'}\right)'\right\|_{L^\infty} \int\limits_a^b\frac{1+\lambda|\phi(x)|}{1+\lambda^{2} \phi^2(x)}dx.\end{align*} Then repeating the procedure of the proof of Theorem \ref{th1.2}, we obtain \eqref{2.1++}.
\end{proof}

\subsection{The case $E_{1,\beta}$}
In this section we are interested in a particular case of the integral operator \eqref{1}, when $\alpha=1,\,\beta>1,$ that is, the integral operator $I_{1,\beta}(\lambda).$

\begin{theorem}\label{th2} Let $-\infty\leq a<b\leq+\infty.$ Let $\phi\in L^\infty[a,b]$ be a real-valued function and let $\psi\in L^1[a,b].$ If $\beta>1$ and $m=\inf\limits_{x\in[a,b]}|\phi(x)|> 0,$ then we have the following estimate
\begin{equation}\label{5.1}
|I_{1,\beta}(\lambda)|\leq \frac{M\|\psi\|_{L^1}}{(1+m\lambda)^{\beta-1}},\,\lambda\geq 0,
\end{equation} where $M$ does not depend on $\lambda.$
\end{theorem}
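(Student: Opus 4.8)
The plan is to reduce the whole estimate to a pointwise bound on the Mittag--Leffler function restricted to the imaginary axis. Since $\psi$ is only assumed to lie in $L^1$ and $\phi$ is merely measurable, there is no structure available for an integration by parts in $x$, so everything rests on the decay of $E_{1,\beta}(iy)$ as $|y|\to\infty$. The target is
\[
|E_{1,\beta}(iy)|\le \frac{C_\beta}{(1+|y|)^{\beta-1}},\qquad y\in\mathbb{R},
\]
with $C_\beta$ depending only on $\beta$. Granting this, the theorem is immediate: by the triangle inequality,
\[
|I_{1,\beta}(\lambda)|\le\int_a^b\bigl|E_{1,\beta}(i\lambda\phi(x))\bigr|\,|\psi(x)|\,dx\le C_\beta\int_a^b\frac{|\psi(x)|}{(1+\lambda|\phi(x)|)^{\beta-1}}\,dx,
\]
and since $|\phi(x)|\ge m$ a.e.\ and $t\mapsto(1+t)^{-(\beta-1)}$ is decreasing (because $\beta-1>0$), the integrand is bounded by $(1+m\lambda)^{-(\beta-1)}|\psi(x)|$, which yields \eqref{5.1} with $M=C_\beta$ independent of $\phi$, $\psi$, $\lambda$.

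For the pointwise bound I would start from the Euler-type integral representation valid for $\beta>1$,
\[
E_{1,\beta}(z)=\frac{1}{\Gamma(\beta-1)}\int_0^1(1-t)^{\beta-2}e^{zt}\,dt,
\]
which is checked by expanding $e^{zt}$ in its series and using the Beta integral $\int_0^1 t^k(1-t)^{\beta-2}\,dt=\Gamma(k+1)\Gamma(\beta-1)/\Gamma(k+\beta)$; note $(1-t)^{\beta-2}$ is integrable on $[0,1]$ precisely because $\beta>1$. For $|y|\le1$ the trivial bound $|E_{1,\beta}(iy)|\le\frac{1}{\Gamma(\beta-1)}\int_0^1(1-t)^{\beta-2}\,dt=\frac{1}{\Gamma(\beta)}$ already dominates $C_\beta(1+|y|)^{-(\beta-1)}$ for a suitable constant. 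For $|y|\ge1$, assume $y>0$ (the case $y<0$ follows by complex conjugation), and rescale: the substitutions $s=yt$ followed by $v=y-s$ transform the integral into
\[
\int_0^1(1-t)^{\beta-2}e^{iyt}\,dt=y^{1-\beta}e^{iy}\int_0^{y}v^{\beta-2}e^{-iv}\,dv,
\]
so that $|E_{1,\beta}(iy)|=\dfrac{y^{1-\beta}}{\Gamma(\beta-1)}\Bigl|\int_0^{y}v^{\beta-2}e^{-iv}\,dv\Bigr|$. The remaining task is to bound $\int_0^Y v^{\beta-2}e^{-iv}\,dv$ uniformly in $Y>0$: the piece on $[0,1]$ is controlled by $\int_0^1 v^{\beta-2}\,dv=\tfrac1{\beta-1}$ since $\beta-2>-1$, and the piece on $[1,Y]$ is handled by one integration by parts together with a Dirichlet-type argument (the monotone factor $v^{\beta-2}$ on $[1,\infty)$ has bounded variation in the regime $1<\beta\le2$), giving $\sup_{Y>0}\bigl|\int_0^Y v^{\beta-2}e^{-iv}\,dv\bigr|=:C_\beta'<\infty$. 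This produces $|E_{1,\beta}(iy)|\le C_\beta'/(\Gamma(\beta-1)|y|^{\beta-1})$ for $|y|\ge1$, and combining the two ranges gives the pointwise estimate.

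The main obstacle is exactly this uniform control of the oscillatory integral $\int_0^Y v^{\beta-2}e^{-iv}\,dv$, and this is the delicate point of the argument: the imaginary axis is the Stokes line $\arg z=\pi/2$ for $E_{1,\beta}$, on which the large-$z$ behaviour blends the branch $z^{1-\beta}e^{z}$, of size $|y|^{1-\beta}$, with the algebraic branch $-z^{-1}/\Gamma(\beta-1)$, of size $|y|^{-1}$; the exponent $\beta-1$ that the rescaling argument above extracts is the one governing the integral when $1<\beta\le2$, where the first branch dominates (for larger $\beta$ one would instead perform the integration by parts without invoking bounded variation and track the resulting boundary term). All constants generated along the way depend only on $\beta$, which is what secures the stated independence of $M$ from $\phi$, $\psi$ and $\lambda$.
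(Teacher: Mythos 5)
Your reduction to a pointwise bound for $E_{1,\beta}$ on the imaginary axis is a genuinely different route from the paper's: the paper splits $E_{1,\beta}(i\lambda\phi(x))$ by the fractional Euler formula \eqref{FEuler} into $E_{2,\beta}(-\lambda^2\phi^2(x))$ and $\lambda|\phi(x)|E_{2,\beta+1}(-\lambda^2\phi^2(x))$ and then quotes the asymptotic expansion \eqref{5.2}, keeping its leading oscillatory term, whereas you work directly from the Beta-integral representation $E_{1,\beta}(z)=\frac{1}{\Gamma(\beta-1)}\int_0^1(1-t)^{\beta-2}e^{zt}\,dt$ and a rescaled oscillatory integral; the final step (monotonicity of $t\mapsto(1+t)^{1-\beta}$ together with $|\phi|\ge m$ a.e.) is the same in both. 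For $1<\beta\le 2$ your argument is complete and more self-contained than the paper's.

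However, for $\beta>2$ there is a genuine gap: your key claim $\sup_{Y>0}\bigl|\int_0^Y v^{\beta-2}e^{-iv}\,dv\bigr|<\infty$ is false, since one integration by parts produces the boundary term $iY^{\beta-2}e^{-iY}$, so the integral grows like $Y^{\beta-2}$; the remedy you sketch (``track the resulting boundary term'') cannot restore the exponent $\beta-1$. In fact the pointwise bound itself fails there: on the Stokes line the algebraic branch contributes $-(iy)^{-1}/\Gamma(\beta-1)+O(|y|^{-2})$, which dominates $|y|^{1-\beta}$ once $\beta>2$. Concretely, $E_{1,3}(iy)=(e^{iy}-1-iy)/(iy)^2$ has modulus of order $1/|y|$, so taking $\phi\equiv 1$, $\psi\equiv 1$ on a finite interval shows the decay rate in \eqref{5.1} cannot exceed $\lambda^{-1}$; the correct uniform exponent is $\min\{\beta-1,1\}$. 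Thus your proof establishes the statement only for $1<\beta\le 2$. To be fair, the paper's own argument has the same blind spot: after invoking \eqref{5.2} it retains only the cosine terms and discards the algebraic terms $\sum_{k\ge1}(-1)^k y^{-k}/\Gamma(\beta-2k)$ (for the $E_{2,\beta+1}$ piece the $k=1$ term is of size $(\lambda|\phi(x)|)^{-1}/\Gamma(\beta-1)$, nonzero for every $\beta>1$), so the estimate as stated should likewise be read with the restriction $1<\beta\le 2$ or with the exponent $\beta-1$ replaced by $\min\{\beta-1,1\}$.
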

\begin{proof} For small $\lambda$ we have bounded estimate for the integral $I_{1, \beta}(\lambda)$. Let $\lambda\geq 1,$ $\phi\in L^\infty[a,b]$ and $\psi\in L^1[a,b].$ Then
\begin{align*}
|I_{1,\beta}(\lambda)|&\leq \int\limits_a^b\left|E_{1,\beta}\left(i\lambda \phi(x)\right)\right|\left|\psi(x)\right|dx.
\end{align*}
Using formula \eqref{FEuler} and estimate \eqref{MLAsym} we have that
\begin{align*}\left|E_{1,\beta}\left(i\lambda \phi(x)\right)\right|&\leq \left|E_{2,\beta}\left(-\lambda^{2} \phi^2(x)\right)\right|+ \lambda|\phi(x)|\left|E_{2,1+\beta}\left(-\lambda^{2} \phi^2(x)\right)\right|.\end{align*}
The following asymptotic estimate (see \cite[page 43]{Kilbas})
\begin{equation}\label{5.2}\begin{split}
E_{2,\beta}(-y)&=y^{(1-\beta)/2}\cos\left(\sqrt{y}+\frac{\pi(1-\beta)}{2}\right)\\& -\sum\limits_{k=1}^N\frac{(-1)^ky^{-k}}{\Gamma(\beta-2k)}+O\left(\frac{1}{y^{N+1}}\right),\,y\rightarrow\infty,
\end{split}\end{equation}
implies
\begin{align*}
|I_{1,\beta}(\lambda)|&\leq \int\limits_a^b\left|E_{1,\beta}\left(i\lambda \phi(x)\right)\right|\left|\psi(x)\right|dx \\&\leq M\lambda^{(1-\beta)}\int\limits_a^b|\phi(x)|^{(1-\beta)} \left|\cos\left(\lambda\phi(x)+\frac{\pi(1-\beta)}{2}\right)\right|\left|\psi(x)\right|dx\\&+ M\lambda^{(1-\beta)}\int\limits_a^b|\phi(x)|^{(1-\beta)} \left|\cos\left(\lambda\phi(x)-\frac{\pi\beta}{2}\right)\right|\left|\psi(x)\right|dx
\\&{\leq} Mm^{1-\beta}\lambda^{(1-\beta)}\int\limits_a^b\left|\psi(x)\right|dx\\&\leq \frac{M\|\psi\|_{L^1}}{(1+m\lambda)^{\beta-1}}.
\end{align*} Here $M$ is a constant that does not depend on $\lambda.$
\end{proof}

\section{The generalisations of the van der Corput second lemma}\label{Sec3}
In this section we will obtain some generalisations of the van der Corput second lemma for the integral operator \eqref{1}, that is for
\begin{equation*}I_{\alpha,\beta}(\lambda)=\int\limits_a^bE_{\alpha,\beta}\left(i\lambda \phi(x)\right)\psi(x)dx,\,\,-\infty< a<b<+\infty,\end{equation*}
where $0<\alpha\leq 1,\,\beta>0.$
\begin{theorem}\label{th2.1} Let $-\infty< a<b<+\infty.$ Let  $0<\alpha<1,\,\beta>0,$ $\phi$ is a real-valued function such that $\phi\in C^k[a,b].$ If $\phi$ has finitely many zeros on $[a,b],$ and $|\phi^{(k)}(x)|\geq 1,\,k\geq 2$ for all $x\in [a,b],$ then we have
\begin{equation}\label{2-1}
\left|\int\limits_a^bE_{\alpha,\beta}\left(i\lambda \phi(x)\right)dx\right|\leq M_k\frac{\log^{\frac{1}{k}}(2+\lambda)}{(1+\lambda)^{1/k}},\,\,\,\lambda\geq 0.
\end{equation} Here $M_k$ does not depend on $\lambda.$
\end{theorem}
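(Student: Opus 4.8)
The plan is to establish \eqref{2-1} by induction on $k$, following the classical van der Corput scheme but using Theorem~\ref{th1.2} (with $\psi\equiv1$) as the bottom rung of the induction. First I would reduce to $\lambda\ge1$: for $0\le\lambda\le1$ the left-hand side of \eqref{2-1} is at most $(b-a)\,C_0$, where $C_0:=\sup_{t\in\mathbb R}|E_{\alpha,\beta}(it)|<\infty$ (finiteness follows from \eqref{MLAsym}, or already from \eqref{5}), while the right-hand side is bounded below by a positive constant on $[0,1]$, so \eqref{2-1} holds there after enlarging $M_k$.

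Fix $k\ge2$ and assume the analogue of \eqref{2-1} with exponent $\tfrac1{k-1}$ in place of $\tfrac1k$ is available for all admissible data; for $k=2$ this hypothesis is to be read as Theorem~\ref{th1.2} with $\psi\equiv1$. Given $\phi\in C^k[a,b]$ with $|\phi^{(k)}|\ge1$, the function $\phi^{(k)}$ has constant sign, so $\phi^{(k-1)}$ is strictly monotone and has at most one zero $x_0\in[a,b]$. The degenerate case (no zero) is easy: with $m'=\min_{[a,b]}|\phi^{(k-1)}|>0$ one applies the $(k-1)$-hypothesis to $\phi/m'$ with parameter $\lambda m'$, getting a bound of order $\log(2+\lambda)(1+\lambda)^{-1/(k-1)}$, which is stronger than needed. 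In the main case one sets $\delta=(1+\lambda)^{-1/k}$ and splits $[a,b]$ into the middle part $J_\delta=(x_0-\delta,x_0+\delta)\cap[a,b]$ and the at most two outer parts (one of which is empty if $x_0$ is an endpoint).

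On $J_\delta$ I would use $|E_{\alpha,\beta}(i\lambda\phi(x))|\le C_0$, so that contribution is $\le2C_0\delta\le2C_0(1+\lambda)^{-1/k}$. On an outer part $[c,d]$, the mean value theorem with $|\phi^{(k)}|\ge1$ gives $|\phi^{(k-1)}(x)|=|\phi^{(k-1)}(x)-\phi^{(k-1)}(x_0)|\ge|x-x_0|\ge\delta$; also $\phi^{(k-1)}$ is monotone there, $\phi\in C^{k-1}[c,d]$, and $\phi$ has finitely many zeros on $[c,d]$. Writing $\tilde\phi:=\phi/\delta$, $\mu:=\lambda\delta$, we have $E_{\alpha,\beta}(i\lambda\phi(x))=E_{\alpha,\beta}(i\mu\tilde\phi(x))$ with $|\tilde\phi^{(k-1)}|\ge1$ (and, when $k=2$, $\tilde\phi$ monotone, so Theorem~\ref{th1.2} applies), whence the $(k-1)$-hypothesis yields $\bigl|\int_c^d E_{\alpha,\beta}(i\lambda\phi(x))\,dx\bigr|\le M_{k-1}\log(2+\mu)(1+\mu)^{-1/(k-1)}$. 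Since $\lambda\ge1$ forces $\mu=\lambda(1+\lambda)^{-1/k}\ge\tfrac12(1+\lambda)^{1-1/k}$ and $\bigl(1-\tfrac1k\bigr)/(k-1)=\tfrac1k$, we get $(1+\mu)^{-1/(k-1)}\lesssim(1+\lambda)^{-1/k}$ and $\log(2+\mu)\le\log(2+\lambda)$; when $k=2$ the factor $\log(2+\|\phi\|_{L^\infty}\lambda)$ coming from Theorem~\ref{th1.2} is absorbed into $\log(2+\lambda)$ at the cost of a $\phi$-dependent constant. Summing the middle and the outer contributions gives $\bigl|\int_a^b E_{\alpha,\beta}(i\lambda\phi(x))\,dx\bigr|\lesssim(1+\lambda)^{-1/k}+\log(2+\lambda)(1+\lambda)^{-1/k}\lesssim\log(2+\lambda)(1+\lambda)^{-1/k}$, closing the induction; a final adjustment of $M_k$ handles $0\le\lambda\le1$.

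The routine points are the endpoint subcases for $x_0$ and the absorptions into constants. The two load-bearing ingredients are: the uniform bound $\sup_{t\in\mathbb R}|E_{\alpha,\beta}(it)|<\infty$, which is precisely where the restriction $0<\alpha<1$ enters through \eqref{MLAsym} and which licenses discarding the $\delta$-neighbourhood of the critical point; and the exponent bookkeeping — the choice $\delta=(1+\lambda)^{-1/k}$ is what balances the $O(\delta)$ neighbourhood term against the rescaled $(k-1)$-estimate, and it is essential that the logarithm does not accumulate across the induction because $\log(2+\lambda\delta)\le\log(2+\lambda)$ at every step. Getting this bookkeeping right is the main thing to watch.
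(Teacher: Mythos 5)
Your proposal is correct and follows essentially the same route as the paper's proof: induction on $k$ with Theorem \ref{th1.2} (with $\psi\equiv 1$) as the base ingredient, splitting $[a,b]$ into a small neighbourhood of the point where $\phi^{(k-1)}$ is smallest (bounded trivially via \eqref{MLAsym}/\eqref{5}) and outer intervals handled by the rescaled lower-order estimate, then optimizing the splitting width, which matches the paper's choice $\epsilon=\lambda^{-1/(k+1)}$ up to your reindexing $\delta=(1+\lambda)^{-1/k}$. The only differences are cosmetic (you fold the $k=2$ case into the induction and make the rescaling $\tilde\phi=\phi/\delta$, $\mu=\lambda\delta$ explicit, where the paper applies Theorem \ref{th1.2} directly with $m=\epsilon$), so no further comparison is needed.
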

\begin{proof} For small $\lambda$ we have bounded estimate for the integral $I_{\alpha, \beta}(\lambda)$.
Let $\lambda\geq 1$ and $k=2.$ Let $c\in [a,b]$ be a point where $|\phi'(c)|\leq |\phi'(x)|$ for all $x\in [a,b].$ As $\phi''(x)$ is non-vanishing, it cannot be the case that $c$ is the interior local minimum/maximum of $\phi'(x).$ Therefore, either $\phi'(c)=0$ or $c$ is one of the endpoints $a; b.$ We can assume that $\phi''\geq 1.$

Let $\phi'(c)=0.$ If $x\in [c+\epsilon, b],$ then we have
\begin{align*}\phi'(x)=\phi'(x)-\phi'(c)=\int\limits_c^x\phi''(s)ds\geq x-c\geq \epsilon.\end{align*} We have a similar estimate for $x\in [a, c-\epsilon].$
Now, we write
$$\int\limits_a^bE_{\alpha,\beta}\left(i\lambda \phi(x)\right)dx=\left(\int\limits_a^{c-\epsilon}+\int\limits_{c-\epsilon}^{c+\epsilon} +\int\limits_{c+\epsilon}^b\right)E_{\alpha,\beta}\left(i\lambda \phi(x)\right)dx.$$ Applying the results of Theorem \ref{th1.2} with $m=\epsilon$ and estimate $\frac{1}{\epsilon\lambda}\geq \frac{1}{1+\epsilon\lambda}$ for $\lambda\geq 1,$ we have that
$$\left|\int\limits_a^{c-\epsilon}E_{\alpha,\beta}\left(i\lambda \phi(x)\right)dx\right|\leq M(\epsilon\lambda)^{-1}\log(2+\lambda),$$ and $$\left|\int\limits_{c+\epsilon}^bE_{\alpha,\beta}\left(i\lambda \phi(x)\right)dx\right|\leq M(\epsilon\lambda)^{-1}\log(2+\lambda).$$ As $$\left|\int\limits_{c-\epsilon}^{c+\epsilon}E_{\alpha,\beta}\left(i\lambda \phi(x)\right)dx\right|\leq 2\epsilon,$$ we have $$\left|\int\limits_a^bE_{\alpha,\beta}\left(i\lambda \phi(x)\right)dx\right|\leq 2M(\epsilon\lambda)^{-1}\log(2+\lambda)+2\epsilon.$$
Taking $\epsilon=\frac{\sqrt{\log(2+\lambda)}}{\sqrt{{\lambda}}}$ we obtain estimate
$$\left|\int\limits_a^bE_{\alpha,\beta}\left(i\lambda \phi(x)\right)dx\right|\leq 2M\frac{\sqrt{\log(2+\lambda)}}{\sqrt{{\lambda}}}+2\frac{\sqrt{\log(2+\lambda)}}{\sqrt{{\lambda}}}\leq M_1\frac{\sqrt{\log(2+\lambda)}}{\sqrt{{\lambda}}}.$$
This gives inequality \eqref{2-1} for $k=2.$ The cases when $c = a$ or $c = b$ can be proved similarly.

Let $k\geq 3$ and $\lambda\geq 1.$ Let us prove the estimate \eqref{2-1} by induction method on $k.$ We assume that \eqref{2-1} is true for $k\geq 3.$ And assuming $\phi^{(k+1)}(x)\geq 1,$ for all $x\in [a,b],$ we prove the estimate \eqref{2-1} for $k + 1.$
Let $c\in [a,b]$ be a unique point where $|\phi^{(k)}(c)|\leq |\phi^{(k)}(x)|$ for all $x\in [a,b].$ If $\phi^{(k)}(c)=0,$ then we obtain $\phi^{(k)}(x)\geq\epsilon$ on the interval $[a,b]$ outside $(c-\epsilon, c+\epsilon).$ Further, we will write $\int\limits_a^bE_{\alpha,\beta}\left(i\lambda \phi(x)\right)dx$ as
\begin{align*}\int\limits_a^bE_{\alpha,\beta}\left(i\lambda \phi(x)\right)dx=\left(\int\limits_a^{c-\epsilon}+\int\limits_{c-\epsilon}^{c+\epsilon}+\int\limits_{c+\epsilon}^b\right) E_{\alpha,\beta}\left(i\lambda \phi(x)\right)dx.\end{align*} By inductive hypothesis
\begin{align*}\left|\int\limits_a^{c-\epsilon}E_{\alpha,\beta}\left(i\lambda \phi(x)\right)dx\right|&\leq M_k\frac{\log^{1/k}(2+\lambda)}{(1+\epsilon\lambda)^{1/k}}\\&\leq M_k\frac{\log^{1/k}(2+\lambda)}{(\epsilon\lambda)^{1/k}},\end{align*} and \begin{align*}\left|\int\limits_{c+\epsilon}^bE_{\alpha,\beta}\left(i\lambda \phi(x)\right)dx\right| &\leq M_k\frac{\log^{1/k}(2+\lambda)}{(1+\epsilon\lambda)^{1/k}}\\&\leq M_k\frac{\log^{1/k}(2+\lambda)}{(\epsilon\lambda)^{1/k}}.\end{align*} As $$\left|\int\limits_{c-\epsilon}^{c+\epsilon}E_{\alpha,\beta}\left(i\lambda \phi(x)\right)dx\right|\leq 2\epsilon,$$ we have $$\left|\int\limits_a^bE_{\alpha,\beta}\left(i\lambda \phi(x)\right)dx\right|\leq 2M_k\frac{\log^{1/k}(2+\lambda)}{(\epsilon\lambda)^{1/k}}+2\epsilon.$$
Taking $\epsilon=\lambda^{-\frac{1}{k+1}}\log^{\frac{1}{k+1}}(2+\lambda)$ we obtain the estimate \eqref{2-1} for $k+1$, which proves the result. The cases when $c = a$ or $c = b$ can be proved similarly.
\end{proof}

\begin{corollary}\label{cor2.1} Let $-\infty< a<b<+\infty.$ Let $0<\alpha<1,\,\beta>0,$ $\phi$ is a real-valued function such that $\phi\in C^k[a,b]$ and let $\psi'\in L^1[a,b].$ If $\phi$ has finitely many zeros on $[a,b],$ and $|\phi^{(k)}(x)|\geq 1,\,k\geq 2$  for all $x\in [a,b],$ then the estimate
\begin{equation}\label{2-2}
|I_{\alpha,\beta}(\lambda)|\leq M_k\left[|\psi(b)|+\int\limits_a^b|\psi'(x)|dx\right]\frac{\log^{\frac{1}{k}}(2+\lambda)}{(1+\lambda)^{1/k}},\,\,\lambda\geq 0,
\end{equation} holds with some $M_k>0$ independent of $\lambda.$
\end{corollary}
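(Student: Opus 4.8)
The plan is to deduce Corollary~\ref{cor2.1} from Theorem~\ref{th2.1} by an integration by parts that transfers the oscillatory decay of the amplitude-free integral $\int_a^b E_{\alpha,\beta}(i\lambda\phi)\,dx$ to the integral $I_{\alpha,\beta}(\lambda)$ carrying the amplitude $\psi$. As usual $I_{\alpha,\beta}(\lambda)$ is bounded for small $\lambda$, so it suffices to treat $\lambda\geq 1$.

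First I would introduce the antiderivative
\begin{equation*}
F(x)=\int\limits_a^x E_{\alpha,\beta}\left(i\lambda\phi(s)\right)ds,\qquad x\in[a,b],
\end{equation*}
so that $F(a)=0$ and $F'(x)=E_{\alpha,\beta}(i\lambda\phi(x))$. The key observation is that Theorem~\ref{th2.1} applies not only on $[a,b]$ but on every subinterval $[a,x]$: the hypothesis $|\phi^{(k)}|\geq 1$ holds on $[a,x]$, and $\phi$ has on $[a,x]$ at most as many zeros as on $[a,b]$, while $x-a\leq b-a$. Since the constant produced in the proof of Theorem~\ref{th2.1} depends only on $k$, on the length of the interval and on the number of zeros of $\phi$ (and not on $\lambda$) — this being inherited from the universal constant of Theorem~\ref{th1.2} and the explicit choice $\epsilon=\lambda^{-1/k}$ — it can be chosen uniformly over $x\in[a,b]$. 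Hence
\begin{equation*}
|F(x)|\leq M_k\,\frac{\log(2+\lambda)}{(1+\lambda)^{1/k}},\qquad x\in[a,b],\ \lambda\geq 1,
\end{equation*}
with $M_k$ independent of $x$ and $\lambda$.

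Next I would integrate by parts. Since $\psi'\in L^1[a,b]$, the function $\psi$ is absolutely continuous, hence continuous and bounded on $[a,b]$, and using $F(a)=0$ we get
\begin{equation*}
I_{\alpha,\beta}(\lambda)=\int\limits_a^b F'(x)\psi(x)\,dx=F(b)\psi(b)-\int\limits_a^b F(x)\psi'(x)\,dx.
\end{equation*}
Estimating crudely and inserting the bound on $|F|$ yields
\begin{equation*}
|I_{\alpha,\beta}(\lambda)|\leq |F(b)|\,|\psi(b)|+\int\limits_a^b|F(x)|\,|\psi'(x)|\,dx\leq M_k\,\frac{\log(2+\lambda)}{(1+\lambda)^{1/k}}\left[|\psi(b)|+\int\limits_a^b|\psi'(x)|\,dx\right],
\end{equation*}
which is exactly \eqref{2-2}.

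The only genuine obstacle is justifying the uniformity of the constant in Theorem~\ref{th2.1} over the family of left subintervals $[a,x]$; once that is in hand, the corollary is a one-line integration by parts. I would therefore spend a sentence inspecting the inductive proof of Theorem~\ref{th2.1} to confirm that neither the choice of the splitting point nor the choice of $\epsilon$ introduces $x$-dependence beyond the (uniformly bounded) number of zeros and the interval length, which is indeed the case. Alternatively, one could repeat the induction of Theorem~\ref{th2.1} directly with the amplitude present, but the route via the antiderivative $F$ is shorter and avoids re-running the base case and the inductive step.
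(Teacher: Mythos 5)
Your proposal is correct and follows essentially the same route as the paper: the paper's proof also writes $I_{\alpha,\beta}(\lambda)=\int_a^b E'(x)\psi(x)\,dx$ with $E(x)=\int_a^x E_{\alpha,\beta}(i\lambda\phi(s))\,ds$, integrates by parts, and invokes Theorem~\ref{th2.1} on the subintervals $[a,x]$ to bound $E$ uniformly. Your extra remark checking that the constant from Theorem~\ref{th2.1} is uniform in $x\in[a,b]$ is a welcome point of care that the paper leaves implicit, but it is not a different method.
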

\begin{proof} We write \eqref{1} as $$\int\limits_a^b E'(x)\psi(x)dx,$$ where $$E(x)=\int\limits_a^xE_{\alpha,\beta}\left(i\lambda \phi(s)\right)ds.$$ Integrating by parts and using the results of Theorem \ref{th2.1} we obtain
\begin{equation*}|I_{\alpha,\beta}(\lambda)|\leq M_k\frac{\log^{\frac{1}{k}}(2+\lambda)}{(1+\lambda)^{1/k}}\left[|\psi(b)|+\int\limits_a^b|\psi'(x)|dx\right],
\end{equation*} completing the proof.
\end{proof}

\subsection{The case $E_{\alpha,\alpha}$}
In this part we consider the particular case of integral operator \eqref{1}, when $0<\alpha<1,\,\beta=\alpha.$ In this case we can obtain an improvement of the decay order in Theorem \ref{th2.1}.

\begin{theorem} Let $-\infty< a<b<+\infty$ and $0<\alpha<1.$ Suppose $\phi$ is a real-valued function.
If $\phi\in C^k[a,b],$ and $|\phi^{(k)}(x)|\geq 1,\,k\geq 2$ for all $x\in [a,b].$ Then we have the following estimate
\begin{equation}\label{2-3}
\left|\int\limits_a^bE_{\alpha,\alpha}\left(i\lambda \phi(x)\right)dx\right|\leq M_k(1+\lambda)^{-\frac{1}{k}},\,\,\lambda\geq 0,
\end{equation} where $M_k$ does not depend on $\lambda.$
\end{theorem}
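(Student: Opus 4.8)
The proof is a direct parallel of the one for Theorem \ref{th2.1}, with one essential change: at the bottom of the induction we feed in the \emph{logarithm-free} estimate \eqref{2.1} of Theorem \ref{th1.3}(i) instead of the estimate \eqref{1.3} of Theorem \ref{th1.2}. Since $E_{\alpha,\alpha}$ is entire, the integral is bounded for $\lambda$ in a compact set, so it suffices to treat $\lambda\ge 1$. The input we shall use repeatedly is the following scaled form of Theorem \ref{th1.3}(i): if $\phi\in C^1[a,b]$, $\phi'$ is monotone and $|\phi'(x)|\ge m>0$ on $[a,b]$, then applying Theorem \ref{th1.3}(i) to $\pm\phi/m$ with new parameter $m\lambda$ — using $\overline{E_{\alpha,\alpha}(z)}=E_{\alpha,\alpha}(\bar z)$ to absorb the sign of $\phi'$ — gives
\[
\left|\int_a^b E_{\alpha,\alpha}\!\left(i\lambda\phi(x)\right)dx\right|\le \frac{M}{1+m\lambda}\le\frac{M}{m\lambda},\qquad \lambda\ge 1,
\]
with $M$ independent of $\phi$, of $m$, and of $\lambda$.

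\emph{Base case $k=2$.} Exactly as in Theorem \ref{th2.1}, choose $c\in[a,b]$ with $|\phi'(c)|=\min_{[a,b]}|\phi'|$; since $\phi''$ does not vanish, $\phi'$ is strictly monotone, so either $\phi'(c)=0$ or $c\in\{a,b\}$. Assuming (WLOG, replacing $\phi$ by $-\phi$) that $\phi''\ge 1$ and, say, $\phi'(c)=0$ with $c$ interior, the identity $\phi'(x)=\int_c^x\phi''\,ds$ gives $|\phi'(x)|\ge|x-c|\ge\epsilon$ on $[a,c-\epsilon]\cup[c+\epsilon,b]$, with $\phi'$ monotone on each of these intervals. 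Writing
\[
\int_a^b = \int_a^{c-\epsilon}+\int_{c-\epsilon}^{c+\epsilon}+\int_{c+\epsilon}^b ,
\]
the middle term has absolute value $\le 2\epsilon$, while each outer term is $\le M(\epsilon\lambda)^{-1}$ by the scaled estimate above with $m=\epsilon$. Hence $|\int_a^bE_{\alpha,\alpha}(i\lambda\phi)\,dx|\le 2M(\epsilon\lambda)^{-1}+2\epsilon$, and choosing $\epsilon=\lambda^{-1/2}$ yields $\lesssim\lambda^{-1/2}\lesssim(1+\lambda)^{-1/2}$. The cases $c=a$, $c=b$ are identical, using a single boundary interval of length $\epsilon$ in place of the symmetric middle one.

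\emph{Inductive step.} Assume \eqref{2-3} holds for some $k\ge 2$ with a constant $M_k$ depending only on $a,b,\alpha,k$, and suppose $|\phi^{(k+1)}(x)|\ge 1$ on $[a,b]$, so that WLOG $\phi^{(k+1)}\ge 1$ and $\phi^{(k)}$ is strictly increasing with at most one zero. Pick $c$ with $|\phi^{(k)}(c)|=\min_{[a,b]}|\phi^{(k)}|$; if $\phi^{(k)}(c)=0$ with $c$ interior, then $|\phi^{(k)}(x)|\ge|x-c|\ge\epsilon$ outside $(c-\epsilon,c+\epsilon)$. Splitting $\int_a^b$ as before, bounding the middle piece by $2\epsilon$, and applying the inductive hypothesis to $\pm\phi/\epsilon$ on the two outer intervals — where $|(\phi/\epsilon)^{(k)}|\ge 1$ and the parameter becomes $\epsilon\lambda$ — gives the outer contributions $\le M_k(1+\epsilon\lambda)^{-1/k}\le M_k(\epsilon\lambda)^{-1/k}$. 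Thus
\[
\left|\int_a^b E_{\alpha,\alpha}\!\left(i\lambda\phi(x)\right)dx\right|\le 2M_k(\epsilon\lambda)^{-1/k}+2\epsilon ,
\]
and the choice $\epsilon=\lambda^{-1/(k+1)}$, for which $(\epsilon\lambda)^{-1/k}=\lambda^{-1/(k+1)}=\epsilon$, gives $\lesssim\lambda^{-1/(k+1)}\lesssim(1+\lambda)^{-1/(k+1)}$, i.e. \eqref{2-3} for $k+1$. The endpoint cases $c=a$, $c=b$ are again handled with a single boundary interval.

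The one point requiring care — and the reason the iteration does not degenerate — is that the estimate being fed into the induction must have a constant uniform over the class of admissible phases. In the scaled estimate of the first paragraph $M$ is independent of $\phi$ and of $m$, so $M_2$ is an absolute constant, and then each $M_{k+1}$ is obtained from $M_k$ by the explicit relation $M_{k+1}\lesssim M_k$, preserving $\phi$-independence at every stage. Had we only a $\phi$-dependent constant at level $k$, the rescaling $\phi\mapsto\phi/\epsilon$ with $\epsilon=\epsilon(\lambda)$ would smuggle $\lambda$-dependence back in and the induction would collapse; this is exactly why it is crucial that Theorem \ref{th1.3}(i) — the $E_{\alpha,\alpha}$-analogue of the first van der Corput lemma — carries a constant independent of the phase.
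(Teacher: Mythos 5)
Your proof is correct and follows essentially the same route as the paper: split at the point where $|\phi^{(k)}|$ is minimal, bound the outer pieces by the logarithm-free estimate \eqref{2.1} of Theorem \ref{th1.3}(i) (resp.\ the inductive hypothesis) with parameter $\epsilon\lambda$, bound the middle piece by $2\epsilon$, and optimise $\epsilon=\lambda^{-1/(k+1)}$. The only difference is that you make explicit the rescaling $\phi\mapsto\pm\phi/\epsilon$ and the phase-independence of the constants, which the paper uses implicitly.
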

\begin{proof} For small $\lambda$ the integral $I_{\alpha, \alpha}(\lambda)$ is bounded.
Let $\lambda\geq 1.$ First, we prove the case $k=2.$
Let $c\in [a,b]$ be a point where $|\phi'(c)|\leq |\phi'(x)|$ for all $x\in [a,b].$ As $\phi''(x)$ is non-vanishing, it cannot be the case that $c$ is the interior local minimum/maximum of $\phi'(x).$ Therefore, either $\phi'(c)=0$ or $c$ is one of the endpoints $a; b.$
Let $\phi'(c)=0.$ If $x\in [c+\epsilon, b],$ then we have
$|\phi'(x)|\geq \epsilon.$ We have a similar estimate for $x\in [a, c-\epsilon].$

Now, we write
$$\int\limits_a^bE_{\alpha,\alpha}\left(i\lambda \phi(x)\right)dx=\left(\int\limits_a^{c-\epsilon}+\int\limits_{c-\epsilon}^{c+\epsilon} +\int\limits_{c+\epsilon}^b\right)E_{\alpha,\alpha}\left(i\lambda \phi(x)\right)dx.$$ As $\phi$ is non-vanishing and $\phi'$ is monotonic (since $|\phi''(x)|\geq 1$) on $[a,c-\epsilon]$ and $[c+\epsilon, b],$ then using \eqref{2.1} in Theorem \ref{th1.3} we have
$$\left|\int\limits_a^{c-\epsilon}E_{\alpha,\alpha}\left(i\lambda \phi(x)\right)dx\right|\leq \frac{M_1}{1+\epsilon\lambda}\leq \frac{M_1}{\epsilon\lambda},$$ and $$\left|\int\limits_{c+\epsilon}^bE_{\alpha,\alpha}\left(i\lambda \phi(x)\right)dx\right|\leq \frac{M_1}{1+\epsilon\lambda}\leq \frac{M_1}{\epsilon\lambda}.$$ As $$\left|\int\limits_{c-\epsilon}^{c+\epsilon}E_{\alpha,\alpha}\left(i\lambda \phi(x)\right)dx\right|\leq 2\epsilon,$$ we have $$\left|\int\limits_a^bE_{\alpha,\alpha}\left(i\lambda \phi(x)\right)dx\right|\leq 2M_1(\epsilon\lambda)^{-1}+2\epsilon.$$
Taking $\epsilon=\frac{1}{\sqrt{{\lambda}}}$ we obtain estimate \eqref{2-3} for $k=2.$

Let us prove the estimate \eqref{2-3} by induction method on $k\geq 2.$ We assume that \eqref{2-3} is true for $k.$ And assuming $\phi^{(k+1)}(x)\geq 1,$ for all $x\in [a,b],$ we prove the validity of estimate \eqref{2-3} for $k + 1.$
Let $c\in [a,b]$ be a unique point where $$|\phi^{(k)}(c)|\leq |\phi^{(k)}(x)|\,\, \textrm{for all}\,\, x\in [a,b].$$ If $\phi^{(k)}(c)=0,$ then we obtain $\phi^{(k)}(x)\geq\epsilon$ on the outside $(c-\epsilon, c+\epsilon).$ Further, we will write $\int\limits_a^bE_{\alpha,\alpha}\left(i\lambda \phi(x)\right)dx$ as
$$\int\limits_a^bE_{\alpha,\alpha}\left(i\lambda \phi(x)\right)dx=\left(\int\limits_a^{c-\epsilon}+\int\limits_{c-\epsilon}^{c+\epsilon}+ \int\limits_{c+\epsilon}^b\right)E_{\alpha,\alpha}\left(i\lambda \phi(x)\right)dx.$$ By inductive hypothesis
$$\left|\int\limits_a^{c-\epsilon}E_{\alpha,\alpha}\left(i\lambda \phi(x)\right)dx\right|\leq M_k(1+\epsilon\lambda)^{-\frac{1}{k}}\leq M_k(\epsilon\lambda)^{-\frac{1}{k}},$$ and $$\left|\int\limits_{c+\epsilon}^bE_{\alpha,\alpha}\left(i\lambda \phi(x)\right)dx\right| \leq M_k(1+\epsilon\lambda)^{-\frac{1}{k}}\leq M_k(\epsilon\lambda)^{-\frac{1}{k}}.$$ As $$\left|\int\limits_{c-\epsilon}^{c+\epsilon}E_{\alpha,\alpha}\left(i\lambda \phi(x)\right)dx\right|\leq 2\epsilon,$$ we have $$\left|\int\limits_a^bE_{\alpha,\alpha}\left(i\lambda \phi(x)\right)dx\right|\leq 2M_k(\epsilon\lambda)^{-\frac{1}{k}}+2\epsilon.$$
Taking $\epsilon=\lambda^{-\frac{1}{k+1}}$ we obtain the estimate \eqref{2-3} for $k+1$. The proof is complete.
\end{proof}

\begin{corollary}\label{cor2.2} Let $-\infty< a<b<+\infty$ and $0<\alpha<1.$ Let $\phi\in C^{k}[a,b]$ be a real-valued function and let $\psi'\in L^1[a,b].$ If $|\phi^{(k)}(x)|\geq 1, \,k\geq 2$  for all $x\in [a,b],$ then we have the following estimate
\begin{equation}\label{2-4}
|I_{\alpha,\alpha}(\lambda)|\leq M_k\left[|\psi(b)|+\int\limits_a^b|\psi'(x)|dx\right](1+\lambda)^{-\frac{1}{k}},\,\,\lambda\geq 0,
\end{equation} where $M_k$ does not depend on $\lambda.$
\end{corollary}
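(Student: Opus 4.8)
The plan is to deduce this corollary from the immediately preceding theorem (the amplitude-free estimate \eqref{2-3} for $E_{\alpha,\alpha}$) in exactly the same way that Corollary~\ref{cor2.1} was obtained from Theorem~\ref{th2.1}: introduce a primitive of the Mittag-Leffler integrand and integrate by parts against $\psi$ a single time. Since $I_{\alpha,\alpha}(\lambda)$ is trivially bounded on any bounded $\lambda$-range, it suffices to treat $\lambda\geq 1$.

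First I would set
$$E(x)=\int\limits_a^x E_{\alpha,\alpha}\left(i\lambda\phi(s)\right)\,ds,\qquad x\in[a,b],$$
so that $E(a)=0$, $E\in C^1[a,b]$, and $E'(x)=E_{\alpha,\alpha}(i\lambda\phi(x))$. The key observation is that the hypothesis $|\phi^{(k)}(x)|\geq 1$ persists on every subinterval $[a,x]\subseteq[a,b]$, so the preceding theorem applies verbatim with $[a,x]$ in place of $[a,b]$. Inspecting its proof, the constant there depends only on $k$: it is inherited from Theorem~\ref{th1.3}, whose constant is independent of the phase, together with the choices of $\epsilon$ as fixed negative powers of $\lambda$ (such as $\epsilon=\lambda^{-1/(j+1)}$), none of which sees the endpoints of the interval. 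Hence one obtains a bound $|E(x)|\leq M_k\,(1+\lambda)^{-1/k}$ that is \emph{uniform in} $x\in[a,b]$.

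Next I would write $I_{\alpha,\alpha}(\lambda)=\int_a^b E'(x)\psi(x)\,dx$ and integrate by parts. Since $\psi'\in L^1[a,b]$, the function $\psi$ is absolutely continuous, while $E$ is $C^1$, so
$$I_{\alpha,\alpha}(\lambda)=E(b)\psi(b)-E(a)\psi(a)-\int\limits_a^b E(x)\psi'(x)\,dx=E(b)\psi(b)-\int\limits_a^b E(x)\psi'(x)\,dx,$$
using $E(a)=0$. Taking absolute values and inserting the uniform bound for $|E(x)|$ gives
$$|I_{\alpha,\alpha}(\lambda)|\leq M_k(1+\lambda)^{-1/k}\left[|\psi(b)|+\int\limits_a^b|\psi'(x)|\,dx\right],$$
which is the desired estimate \eqref{2-4}.

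I do not anticipate a substantive obstacle; the only point that deserves care — and the step I would present most explicitly — is the uniformity in $x$ of the constant in the bound for $|E(x)|$, since the preceding theorem is stated only for the full interval $[a,b]$. Everything else is the standard ``integrate the oscillatory factor, then integrate by parts once against the amplitude'' reduction already exploited for Corollary~\ref{cor2.1}, and the trivial bound for small $\lambda$.
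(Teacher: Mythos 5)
Your argument is exactly the paper's: the authors prove Corollary \ref{cor2.2} ``similarly as Corollary \ref{cor2.1}'', i.e.\ by setting $E(x)=\int_a^x E_{\alpha,\alpha}(i\lambda\phi(s))\,ds$, bounding it via the amplitude-free estimate \eqref{2-3} applied on $[a,x]$, and integrating by parts against $\psi$. Your extra remark on the uniformity in $x$ of the constant is a correct and welcome clarification of a point the paper leaves implicit, but it does not change the route.
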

Corollary \ref{cor2.2} is proved similarly as Corollary \ref{cor2.1}.

\section{Optimal bounds of the van der Corput type estimates}\label{Sec5}

In this section we find optimal bounds in estimates for \eqref{1}, when $0<\alpha\leq \frac{1}{2}$ and $\beta\geq 2\alpha.$ We assume $-\infty<a<b<+\infty.$ By optimality we mean estimates from bellow showing the sharpness of decay orders in general.

\begin{theorem}\label{th4.1} Let $-\infty< a<b<+\infty$ and $0<\alpha\leq 1/2,\,\beta>2\alpha.$ Let $\phi\in L^\infty[a,b]$ be a real-valued function and let $\psi\in L^\infty[a,b].$\\
Suppose that $m_1=\inf\limits_{a\leq x\leq b}|\phi(x)|> 0$ and $m_2=\inf\limits_{a\leq x\leq b}|\psi(x)|> 0.$ Then we have
\begin{equation}\label{4.1}
|I_{\alpha,\beta}(\lambda)| \leq \frac{K_1\|\psi\|_{L^\infty}}{(b-a)}\frac{1+\lambda \|\phi\|_{L^\infty}}{1+k_1\lambda^2m_1^2},\,\lambda\geq 0,
\end{equation}  where $K_1=\max\left\{\frac{1}{\Gamma(\beta)},\frac{1}{\Gamma(\alpha+\beta)}\right\}$ and $k_1=\min\left\{\frac{\Gamma(\beta)}{\Gamma(2\alpha+\beta)}, \frac{\Gamma(\alpha+\beta)}{\Gamma(3\alpha+\beta)}\right\},$\\
and
\begin{equation}\label{4.2}|I_{\alpha,\beta}(\lambda)|\geq\frac{m_2}{(b-a)\Gamma(\alpha+\beta)}\frac{\lambda m_1}{1+ \frac{\Gamma(\beta-\alpha)}{\Gamma(\alpha+\beta)}\lambda^2\|\phi\|_{L^\infty}^2},\,\lambda\geq 0.\end{equation}
If $m_1=\inf\limits_{a\leq x\leq b}|\phi(x)|=0$ and $m_2=\inf\limits_{a\leq x\leq b}|\psi(x)|> 0.$ Then we have
\begin{equation}\label{4.3}
|I_{\alpha,\beta}(\lambda)|\geq
\frac{m_2}{(b-a)\Gamma(\beta)}\frac{1}{1+\frac{\Gamma(\beta-2\alpha)}{\Gamma(\beta)}\lambda^2\|\phi\|_{L^\infty}^2},\,\,\lambda\geq 0.
\end{equation}

\end{theorem}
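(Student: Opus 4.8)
The plan is to reduce both the upper and the lower bounds to the sharp two‑sided estimates \eqref{ML3Op} by means of the fractional Euler formula \eqref{FEuler},
\begin{equation*}
E_{\alpha,\beta}\left(i\lambda\phi(x)\right)=E_{2\alpha,\beta}\left(-\lambda^2\phi^2(x)\right)+i\lambda\phi(x)\,E_{2\alpha,\beta+\alpha}\left(-\lambda^2\phi^2(x)\right).
\end{equation*}
Since $0<\alpha\le\tfrac12$ we have $0<2\alpha\le1$, and $\beta>2\alpha$ forces both $\beta+\alpha>2\alpha$ and $\beta-2\alpha>0$ (hence also $\beta-\alpha>0$), so \eqref{ML3Op} applies with exponent $2\alpha$ and with the second parameter equal to $\beta$ or to $\beta+\alpha$, evaluated at the nonnegative argument $y=\lambda^2\phi^2(x)$. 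In particular $E_{2\alpha,\beta}(-\lambda^2\phi^2(x))$ and $E_{2\alpha,\beta+\alpha}(-\lambda^2\phi^2(x))$ are strictly positive, a fact that is decisive for the lower bounds.

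For the upper estimate \eqref{4.1} I would start from
\begin{equation*}
\left|E_{\alpha,\beta}\left(i\lambda\phi(x)\right)\right|\le E_{2\alpha,\beta}\left(-\lambda^2\phi^2(x)\right)+\lambda|\phi(x)|\,E_{2\alpha,\beta+\alpha}\left(-\lambda^2\phi^2(x)\right),
\end{equation*}
insert the upper halves of \eqref{ML3Op} for each term, and then bound $|\phi(x)|\le\|\phi\|_{L^\infty}$ in the numerators while keeping $|\phi(x)|\ge m_1$ in the denominators, using the monotonicity of $t\mapsto(1+ct)^{-1}$. With $K_1=\max\{1/\Gamma(\beta),1/\Gamma(\alpha+\beta)\}$ and $k_1=\min\{\Gamma(\beta)/\Gamma(2\alpha+\beta),\Gamma(\alpha+\beta)/\Gamma(3\alpha+\beta)\}$ the two terms collapse to the pointwise bound $K_1\frac{1+\lambda\|\phi\|_{L^\infty}}{1+k_1\lambda^2m_1^2}$, and integrating this against $|\psi|\le\|\psi\|_{L^\infty}$ over the interval of length $b-a$ produces \eqref{4.1}.

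For the lower bounds I would split $I_{\alpha,\beta}(\lambda)$ into real and imaginary parts via \eqref{FEuler}. When $m_1>0$ the imaginary part equals $\lambda\int_a^b\phi(x)\psi(x)E_{2\alpha,\beta+\alpha}(-\lambda^2\phi^2(x))\,dx$, and since the weight $E_{2\alpha,\beta+\alpha}(-\lambda^2\phi^2(x))$ is positive, the first mean value theorem for integrals reduces the estimate to a lower bound for $\int_a^b E_{2\alpha,\beta+\alpha}(-\lambda^2\phi^2(x))\,dx$, which the lower half of \eqref{ML3Op} together with $|\phi(x)|\le\|\phi\|_{L^\infty}$ supplies, giving \eqref{4.2}. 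When $m_1=0$ the factor $\lambda\phi$ is useless, so I would work instead with the real part $\int_a^b\psi(x)E_{2\alpha,\beta}(-\lambda^2\phi^2(x))\,dx$ and run the identical positivity/mean-value argument with the lower half of \eqref{ML3Op} for $E_{2\alpha,\beta}$, which yields \eqref{4.3}. The main obstacle is precisely this last step: the mean value theorem needs $\phi\psi$ (respectively $\psi$) to retain a constant sign on $[a,b]$, so the real content of the lower estimates is controlling the sign of the amplitude and phase against the strictly positive Mittag-Leffler weights; once this is arranged, everything else is a direct substitution of \eqref{ML3Op} and elementary monotonicity.
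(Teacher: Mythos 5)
Your proposal follows essentially the same route as the paper: the fractional Euler formula \eqref{FEuler} combined with the two-sided bounds \eqref{ML3Op}, a pointwise estimate integrated against $|\psi|\leq\|\psi\|_{L^\infty}$ for \eqref{4.1}, and passage to the real (resp.\ imaginary) part together with strict positivity of the Mittag-Leffler weights for \eqref{4.3} (resp.\ \eqref{4.2}). The sign-constancy of $\psi$ and of $\phi\psi$ that you single out as the main obstacle is indeed needed, but the paper's own proof does not address it either: it simply writes $|I_{\alpha,\beta}(\lambda)|\geq m_2\int_a^b E_{2\alpha,\beta}\left(-\lambda^{2}\phi^{2}(x)\right)dx$ (and the analogous inequality carrying the factor $\lambda|\phi(x)|$), tacitly assuming the amplitude does not change sign, so on this point your write-up is, if anything, more explicit than the original argument.
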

\begin{proof} First, we prove estimate \eqref{4.1}. Let $\phi\in L^\infty[a,b]$ and $\psi\in L^\infty[a,b].$ Then
\begin{align*}
|I_{\alpha,\beta}(\lambda)|&\leq \int\limits_a^b\left|E_{\alpha,\beta}\left(i\lambda \phi(x)\right)\right|\left|\psi(x)\right|dx.
\end{align*}
Using formula \eqref{FEuler} and estimate \eqref{MLAsym} we have that
\begin{equation}\label{4-1}
\left|E_{\alpha,\beta}\left(i\lambda \phi(x)\right)\right|\leq E_{2\alpha,\beta}\left(-\lambda^{2} \phi^2(x)\right)+ \lambda|\phi(x)|E_{2\alpha,\alpha+\beta}\left(-\lambda^{2} \phi^2(x)\right).\end{equation}
The properties of functions $\phi$ and $\psi,$ and the use of estimate \eqref{ML3Op} lead to the result
\begin{align*}
|I_{\alpha,\beta}(\lambda)|&\leq \int\limits_a^b\left|E_{\alpha,\beta}\left(i\lambda \phi(x)\right)\right|\left|\psi(x)\right|dx \\&\leq\int\limits_a^b\left(E_{2\alpha,\beta}\left(-\lambda^{2} \phi^2(x)\right)+ \lambda|\phi(x)|E_{2\alpha,\alpha+\beta}\left(-\lambda^{2} \phi^2(x)\right)\right)\left|\psi(x)\right|dx\\& \leq
\|\psi\|_{L^\infty}\left[\frac{1}{\Gamma(\beta)} \int\limits_a^b\frac{1}{1+\frac{\Gamma(\beta)}{\Gamma(2\alpha+\beta)}\lambda^2\phi^2(x)}dx+ \frac{1}{\Gamma(\alpha+\beta)}\int\limits_a^b\frac{\lambda|\phi(x)|}{1+\frac{\Gamma(\alpha+\beta)}{\Gamma(3\alpha+\beta)} \lambda^2\phi^2(x)}dx\right] \\& \leq
\|\psi\|_{L^\infty}\max\left\{\frac{1}{\Gamma(\beta)},\frac{1}{\Gamma(\alpha+\beta)}\right\}\times \\& \times\left[\int\limits_a^b\frac{1}{1+\frac{\Gamma(\beta)}{\Gamma(2\alpha+\beta)} \lambda^2\phi^2(x)}dx+ \int\limits_a^b\frac{\lambda|\phi(x)|}{1+\frac{\Gamma(\alpha+\beta)}{\Gamma(3\alpha+\beta)}\lambda^2\phi^2(x)}dx\right] \\&\leq
\frac{\|\psi\|_{L^\infty}}{(b-a)}\max\left\{\frac{1}{\Gamma(\beta)},\frac{1}{\Gamma(\alpha+\beta)}\right\} \left[\frac{1}{1+\frac{\Gamma(\beta)}{\Gamma(2\alpha+\beta)}\lambda^2m_1^2}+ \frac{\lambda \|\phi\|_{L^\infty}}{1+\frac{\Gamma(\alpha+\beta)}{\Gamma(3\alpha+\beta)}\lambda^2m_1^2}\right]\\&\leq
\frac{\|\psi\|_{L^\infty}}{(b-a)}\max\left\{\frac{1}{\Gamma(\beta)},\frac{1}{\Gamma(\alpha+\beta)}\right\} \frac{1+\lambda \|\phi\|_{L^\infty}}{1+\min\left\{\frac{\Gamma(\beta)}{\Gamma(2\alpha+\beta)}, \frac{\Gamma(\alpha+\beta)}{\Gamma(3\alpha+\beta)}\right\}\lambda^2m_1^2}.
\end{align*}
Now, we prove the lower bound estimate \eqref{4.3}. As $|x+iy|\geq |x|,$ we obtain
\begin{equation}\label{4-2}\left|E_{\alpha,\beta}\left(i\lambda \phi(x)\right)\right|\geq E_{2\alpha,\beta}\left(-\lambda^{2} \phi^2(x)\right).\end{equation}
Hence, by estimate \eqref{ML3Op}, it follows that
\begin{align*}
|I_{\alpha,\beta}(\lambda)|&\geq m_2\int\limits_a^bE_{2\alpha,\beta}\left(-\lambda^{2} \phi^2(x)\right)dx\\& \geq
\frac{m_2}{\Gamma(\beta)}\int\limits_a^b\frac{1}{1+\frac{\Gamma(\beta-2\alpha)}{\Gamma(\beta)}\lambda^2\phi^2(x)}dx\\&\geq
\frac{m_2}{(b-a)\Gamma(\beta)}\frac{1}{1+\frac{\Gamma(\beta-2\alpha)}{\Gamma(\beta)}\lambda^2\|\phi\|_{L^\infty(a,b)}^2}.
\end{align*}
Now, we prove the lower bound estimate \eqref{4.2}. As $|x+iy|\geq |y|,$ we obtain
\begin{equation}\label{4-2}\left|E_{\alpha,\beta}\left(i\lambda \phi(x)\right)\right|\geq \lambda\left|\phi(x)\right|E_{2\alpha,\alpha+\beta}\left(-\lambda^{2} \phi^2(x)\right).\end{equation}
Hence, by estimate \eqref{ML3Op}, it follows
\begin{align*}
|I_{\alpha,\beta}(\lambda)|&\geq m_2\lambda\int\limits_a^b|\phi(x)|E_{2\alpha,\alpha+\beta}\left(-\lambda^{2} \phi^2(x)\right)dx\\& \geq
\frac{m_2\lambda}{\Gamma(\alpha+\beta)}\int\limits_a^b\frac{|\phi(x)|} {1+\frac{\Gamma(\beta-\alpha)}{\Gamma(\alpha+\beta)}\lambda^2\phi^2(x)}dx \\&\geq
\frac{m_2}{(b-a)\Gamma(\alpha+\beta)}\frac{\lambda m_1}{1+\frac{\Gamma(\beta-\alpha)}{\Gamma(\alpha+\beta)}\lambda^2\|\phi\|_{L^\infty(a,b)}^2}.
\end{align*} The proof is complete.
\end{proof}

\begin{theorem}\label{th4.2} Let $-\infty< a<b<+\infty$ and $0<\alpha< 1/2,\,\beta= 2\alpha.$ Let $\phi\in L^\infty[a,b]$ be a real-valued function and let $\psi\in L^\infty[a,b].$ \\
Let $m_1=\inf\limits_{a\leq x\leq b}|\phi(x)|> 0$ and $m_2=\inf\limits_{a\leq x\leq b}|\psi(x)|> 0,$ then we have the following estimates
\begin{equation}\label{4.4}
|I_{\alpha,2\alpha}(\lambda)| \leq \frac{K\|\psi\|_{L^\infty}}{(b-a)}\frac{1+\lambda \|\phi\|_{L^\infty}\left(1+\sqrt{\frac{\Gamma(1+2\alpha)}{\Gamma(1+4\alpha)}}\lambda^2\|\phi\|_{L^\infty}^2\right)} {\left(1+\min\left\{\frac{\Gamma(3\alpha)}{\Gamma(5\alpha)}, \sqrt{\frac{\Gamma(1+2\alpha)}{\Gamma(1+4\alpha)}}\right\}\lambda^2m_1^2\right)^2},\,\lambda\geq 0,
\end{equation}
and
\begin{equation}\label{4.6}|I_{\alpha,2\alpha}(\lambda)|\geq\frac{m_2}{(b-a)\Gamma(3\alpha)}\frac{\lambda m_1}{1+ \frac{\Gamma(\alpha)}{\Gamma(3\alpha)}\lambda^2\|\phi\|_{L^\infty}^2},\,\lambda\geq 0,\end{equation}
where $K=\max\left\{\frac{1}{\Gamma(2\alpha)}, \frac{1}{\Gamma(3\alpha)}\right\}.$\\
If $m_1=\inf\limits_{a\leq x\leq b}|\phi(x)|=0$ and $m_2=\inf\limits_{a\leq x\leq b}|\psi(x)|> 0,$ then we have
\begin{equation}\label{4.5}
|I_{\alpha,2\alpha}(\lambda)|\geq
\frac{m_2}{(b-a)\Gamma(2\alpha)}\frac{1}{\left(1+\sqrt{\frac{\Gamma(1-2\alpha)} {\Gamma(1+2\alpha)}}\lambda^2\|\phi\|_{L^\infty}^2\right)^2},\,\,\lambda\geq 0.
\end{equation}
\end{theorem}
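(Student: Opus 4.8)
The plan is to run the same scheme as in Theorem~\ref{th4.1}, now taking advantage of the fact that the choice $\beta=2\alpha$ turns the \emph{first} Mittag-Leffler function in the fractional Euler formula \eqref{FEuler} into a \emph{diagonal} one, $E_{2\alpha,2\alpha}$, to which the sharper estimate \eqref{ML2Op} (with quadratic, not merely linear, decay) applies. Indeed, \eqref{FEuler} with $\beta=2\alpha$ reads
\begin{equation*}
E_{\alpha,2\alpha}(i\lambda\phi(x))=E_{2\alpha,2\alpha}\bigl(-\lambda^2\phi^2(x)\bigr)+i\lambda\phi(x)\,E_{2\alpha,3\alpha}\bigl(-\lambda^2\phi^2(x)\bigr),
\end{equation*}
and since $0<2\alpha<1$ and $3\alpha>2\alpha$, one may apply \eqref{ML2Op} (with parameter $2\alpha$) to the first summand and \eqref{ML3Op} (with parameters $2\alpha$ and $3\alpha$) to the second; this is precisely where the hypotheses $\alpha<1/2$ and $\beta=2\alpha$ are used.

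For the upper bound \eqref{4.4} I would start from $|I_{\alpha,2\alpha}(\lambda)|\le\int_a^b|E_{\alpha,2\alpha}(i\lambda\phi(x))|\,|\psi(x)|\,dx$ together with the triangle inequality $|E_{\alpha,2\alpha}(i\lambda\phi)|\le E_{2\alpha,2\alpha}(-\lambda^2\phi^2)+\lambda|\phi|\,E_{2\alpha,3\alpha}(-\lambda^2\phi^2)$, and then insert the upper halves of \eqref{ML2Op} and \eqref{ML3Op}, namely $\Gamma(2\alpha)E_{2\alpha,2\alpha}(-t)\le(1+At)^{-2}$ with $A=\sqrt{\Gamma(1+2\alpha)/\Gamma(1+4\alpha)}$ and $\Gamma(3\alpha)E_{2\alpha,3\alpha}(-t)\le(1+Bt)^{-1}$ with $B=\Gamma(3\alpha)/\Gamma(5\alpha)$. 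Using $|\psi|\le\|\psi\|_{L^\infty}$, the bounds $m_1^2\le\phi^2(x)\le\|\phi\|_{L^\infty}^2$, and integrating the resulting rational functions over $[a,b]$, the first term is then bounded by $\|\psi\|_{L^\infty}\Gamma(2\alpha)^{-1}(1+A\lambda^2m_1^2)^{-2}$, and the second by $\|\psi\|_{L^\infty}\Gamma(3\alpha)^{-1}\lambda\|\phi\|_{L^\infty}(1+B\lambda^2m_1^2)^{-1}$, each up to the same $(b-a)$-factor as in Theorem~\ref{th4.1}. The last — and only slightly delicate — step is to fold these into the single quotient in \eqref{4.4}: replace $B$ by $\min\{A,B\}$ in the second denominator (this only weakens the estimate, since in $\frac{\lambda|\phi|}{1+Bt}$ a smaller constant gives a larger value), write $\frac{\lambda\|\phi\|_{L^\infty}}{1+\min\{A,B\}\lambda^2m_1^2}$ as $\frac{\lambda\|\phi\|_{L^\infty}(1+\min\{A,B\}\lambda^2m_1^2)}{(1+\min\{A,B\}\lambda^2m_1^2)^2}$ and overestimate its numerator by $\lambda\|\phi\|_{L^\infty}(1+A\lambda^2\|\phi\|_{L^\infty}^2)$ (using $m_1^2\le\|\phi\|_{L^\infty}^2$ and $\min\{A,B\}\le A$), note $(1+A\lambda^2m_1^2)^{-2}\le(1+\min\{A,B\}\lambda^2m_1^2)^{-2}$ for the first term, and finally collect the prefactors into $K=\max\{\Gamma(2\alpha)^{-1},\Gamma(3\alpha)^{-1}\}$. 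I expect this bookkeeping with the two competing Gamma-constants $A$ and $B$ — in particular tracking which of them must survive in the numerator and which in the denominator — to be the main obstacle; everything else mirrors the proof of Theorem~\ref{th4.1}.

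For the two lower bounds I would use $|x+iy|\ge|y|$ and $|x+iy|\ge|x|$ on the same decomposition, together with the positivity of $E_{2\alpha,2\alpha}(-t)$ and $E_{2\alpha,3\alpha}(-t)$ for $t\ge0$ (immediate from the lower halves of \eqref{ML2Op} and \eqref{ML3Op}, since $\Gamma(1-2\alpha)>0$ for $2\alpha<1$ and $\Gamma(\alpha)>0$). When $m_1>0$, from $|E_{\alpha,2\alpha}(i\lambda\phi)|\ge\lambda|\phi|\,E_{2\alpha,3\alpha}(-\lambda^2\phi^2)$ and $|\psi|\ge m_2$ one gets $|I_{\alpha,2\alpha}(\lambda)|\ge m_2\lambda\int_a^b|\phi(x)|\,E_{2\alpha,3\alpha}(-\lambda^2\phi^2(x))\,dx$; applying the lower bound in \eqref{ML3Op} (so that the constant $\Gamma(\beta-\alpha)/\Gamma(\beta)$ becomes $\Gamma(\alpha)/\Gamma(3\alpha)$), then using $|\phi|\ge m_1$ in the numerator, $\phi^2\le\|\phi\|_{L^\infty}^2$ in the denominator, and integrating over $[a,b]$, yields \eqref{4.6}. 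When $m_1=0$ I would instead keep the real part, $|E_{\alpha,2\alpha}(i\lambda\phi)|\ge E_{2\alpha,2\alpha}(-\lambda^2\phi^2)$, so that $|I_{\alpha,2\alpha}(\lambda)|\ge m_2\int_a^b E_{2\alpha,2\alpha}(-\lambda^2\phi^2(x))\,dx$; inserting the lower bound in \eqref{ML2Op} with parameter $2\alpha$ (constant $\sqrt{\Gamma(1-2\alpha)/\Gamma(1+2\alpha)}$ — here $2\alpha<1$ is essential for $\Gamma(1-2\alpha)$ to be defined and positive), bounding $\phi^2\le\|\phi\|_{L^\infty}^2$, and integrating over $[a,b]$ gives \eqref{4.5}. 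These lower estimates involve no real obstacle beyond choosing the correct component of \eqref{FEuler} and the matching one-sided optimal bound.
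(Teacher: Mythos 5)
Your proposal is correct and follows essentially the same route as the paper: the fractional Euler decomposition \eqref{FEuler} with $\beta=2\alpha$, the upper halves of \eqref{ML2Op} and \eqref{ML3Op} for the two summands in \eqref{4.4}, and the splits $|x+iy|\ge|x|$, $|x+iy|\ge|y|$ with the corresponding lower halves of \eqref{ML2Op} and \eqref{ML3Op} for \eqref{4.5} and \eqref{4.6}. The only difference is a trivial rearrangement in merging the two fractions for \eqref{4.4} (you pass to $\min\{A,B\}$ before squaring the denominator, while the paper enlarges the second numerator by the factor $1+A\lambda^2\|\phi\|_{L^\infty}^2$ first), which yields the same bound.
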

\begin{proof} First, we prove the estimate \eqref{4.4}. Let $\phi\in L^\infty[a,b]$ and $\psi\in L^\infty[a,b].$ Then
\begin{align*}
|I_{\alpha,2\alpha}(\lambda)|&\leq \int\limits_a^b\left|E_{\alpha,2\alpha}\left(i\lambda \phi(x)\right)\right|\left|\psi(x)\right|dx.
\end{align*}
Using formula \eqref{FEuler} we have that
\begin{equation}\label{4.7}
\left|E_{\alpha,2\alpha}\left(i\lambda \phi(x)\right)\right|\leq E_{2\alpha,2\alpha}\left(-\lambda^{2} \phi^2(x)\right)+ \lambda|\phi(x)|E_{2\alpha,3\alpha}\left(-\lambda^{2} \phi^2(x)\right).\end{equation}
The properties of functions $\phi$ and $\psi,$ using estimates \eqref{ML2Op} and \eqref{ML3Op}, imply
\begin{align*}
|I_{\alpha,2\alpha}(\lambda)|&\leq \int\limits_a^b\left|E_{\alpha,2\alpha}\left(i\lambda \phi(x)\right)\right|\left|\psi(x)\right|dx \\&\leq\int\limits_a^b\left(E_{2\alpha,2\alpha}\left(-\lambda^{2} \phi^2(x)\right)+ \lambda|\phi(x)|E_{2\alpha,3\alpha}\left(-\lambda^{2} \phi^2(x)\right)\right)\left|\psi(x)\right|dx\\& \leq
\|\psi\|_{L^\infty}\left[\frac{1}{\Gamma(2\alpha)} \int\limits_a^b\frac{1}{\left(1+\sqrt{\frac{\Gamma(1+2\alpha)}{\Gamma(1+4\alpha)}}\lambda^2\phi^2(x)\right)^2}dx+ \frac{1}{\Gamma(3\alpha)}\int\limits_a^b\frac{\lambda|\phi(x)|}{1+\frac{\Gamma(3\alpha)}{\Gamma(5\alpha)} \lambda^2\phi^2(x)}dx\right] \\& \leq
\|\psi\|_{L^\infty}K \left[\int\limits_a^b\frac{1}{\left(1+\sqrt{\frac{\Gamma(1+2\alpha)}{\Gamma(1+4\alpha)}}\lambda^2\phi^2(x)\right)^2}dx+ \int\limits_a^b\frac{\lambda|\phi(x)|}{1+\frac{\Gamma(3\alpha)}{\Gamma(5\alpha)} \lambda^2\phi^2(x)}dx\right] \\&\leq
\frac{K\|\psi\|_{L^\infty}}{(b-a)} \left[\frac{1}{\left(1+\sqrt{\frac{\Gamma(1+2\alpha)}{\Gamma(1+4\alpha)}}\lambda^2m_1^2\right)^2}+ \frac{\lambda \|\phi\|_{L^\infty}\left(1+\sqrt{\frac{\Gamma(1+2\alpha)}{\Gamma(1+4\alpha)}}\lambda^2\|\phi\|_{L^\infty}^2\right)} {\left(1+\frac{\Gamma(3\alpha)}{\Gamma(5\alpha)} \lambda^2m_1^2\right)\left(1+\sqrt{\frac{\Gamma(1+2\alpha)}{\Gamma(1+4\alpha)}}\lambda^2m_1^2\right)}\right]\\&\leq
\frac{K\|\psi\|_{L^\infty}}{(b-a)}\frac{1+\lambda \|\phi\|_{L^\infty}\left(1+\sqrt{\frac{\Gamma(1+2\alpha)}{\Gamma(1+4\alpha)}}\lambda^2\|\phi\|_{L^\infty}^2\right)} {\left(1+\min\left\{\frac{\Gamma(3\alpha)}{\Gamma(5\alpha)}, \sqrt{\frac{\Gamma(1+2\alpha)}{\Gamma(1+4\alpha)}}\right\}\lambda^2m_1^2\right)^2},
\end{align*}
where $K=\max\left\{\frac{1}{\Gamma(2\alpha)}, \frac{1}{\Gamma(3\alpha)}\right\}.$

Now, we prove estimate \eqref{4.5}. We have
\begin{equation}\label{4.8}\left|E_{\alpha,2\alpha}\left(i\lambda \phi(x)\right)\right|\geq E_{2\alpha,2\alpha}\left(-\lambda^{2} \phi^2(x)\right).\end{equation}
Hence, by estimate \eqref{ML2Op}, it follows that
\begin{align*}
|I_{\alpha,2\alpha}(\lambda)|&\geq m_2\int\limits_a^bE_{2\alpha,2\alpha}\left(-\lambda^{2} \phi^2(x)\right)dx\\& \geq
\frac{m_2}{\Gamma(2\alpha)}\int\limits_a^b\frac{1} {\left(1+\sqrt{\frac{\Gamma(1-2\alpha)}{\Gamma(1+2\alpha)}}\lambda^2\phi^2(x)\right)^2}dx \\&\geq
\frac{m_2}{(b-a)\Gamma(2\alpha)}\frac{1}{\left(1+\sqrt{\frac{\Gamma(1-2\alpha)} {\Gamma(1+2\alpha)}}\lambda^2\|\phi\|_{L^\infty}^2\right)^2}.
\end{align*}
The estimate \eqref{4.6} can be proved similarly as estimate \eqref{4.2} by replacing $\beta=2\alpha$. In fact
\begin{align*}
|I_{\alpha,2\alpha}(\lambda)|&\geq m_2\lambda\int\limits_a^b|\phi(x)|E_{2\alpha,3\alpha}\left(-\lambda^{2} \phi^2(x)\right)dx\\& \geq
\frac{m_2\lambda}{\Gamma(3\alpha)}\int\limits_a^b\frac{|\phi(x)|} {1+\frac{\Gamma(\alpha)}{\Gamma(3\alpha)}\lambda^2\phi^2(x)}dx \\&\geq
\frac{m_2}{(b-a)\Gamma(3\alpha)}\frac{\lambda m_1}{1+\frac{\Gamma(\alpha)}{\Gamma(3\alpha)}\lambda^2\|\phi\|_{L^\infty(a,b)}^2}.
\end{align*}
The proof is complete.
\end{proof}

\section{Some applications}
In this section we give some applications of the obtained results.
\subsection{Generalised Riemann-Lebesgue lemma}
The Riemann-Lebesgue lemma is one of the most important results of harmonic and asymptotic analysis. The simplest form of the Riemann-Lebesgue lemma (see e.g. \cite[Page 3]{Boch}) states that for a function $f\in L^1(a,b)$  we have
\begin{equation}\label{RiemLeb1}
\lim\limits_{k\rightarrow\infty}\int\limits_a^be^{ikx}f(x)dx=0.
\end{equation}
If $f$ is continuously differentiable on $[a,b],$  then it follows that
\begin{equation}\label{RiemLeb2}
\int\limits_a^be^{ikx}f(x)dx=\mathcal{O}\left(\frac{1}{k}\right),\,\,\,\textrm{as}\,\,\,k\rightarrow\infty.
\end{equation}
Next, we consider a generalisation of the Fourier integral in the form
\begin{equation}\label{RiemLeb3}\int\limits_a^bE_{\alpha,\beta}\left(ik x\right)f(x)dx,\end{equation}
where $0<\alpha\leq 1,\,\beta>0$ and $-\infty< a<b<+\infty.$

Let $0<\alpha< 1,\,\beta>0$ or $\alpha= 1,\,\beta>1.$ If $f\in L^1(a,b),$ then from the results of Theorems \ref{th1} and \ref{th2} it follows that
$$\lim\limits_{k\rightarrow\infty}\int\limits_a^bE_{\alpha,\beta}\left(ik x\right)f(x)dx=0.$$

If $f\in C^1(a,b),$ then from the results of Theorems \ref{th1}, \ref{th1.3} and \ref{th2} we obtain
\begin{itemize}
  \item for $0<\alpha<1, \beta>0,$ $-\infty<a<b<+\infty,$ we have
  \begin{equation*}
  \int\limits_a^bE_{\alpha,\beta}\left(ik x\right)f(x)dx=\mathcal{O}\left(\frac{1}{k}\right).
  \end{equation*}
  \item for $0<\alpha<1, \beta=\alpha,$ $0<a<b<+\infty,$ we have
  \begin{equation*}
  \int\limits_a^bE_{\alpha,\alpha}\left(ik x\right)f(x)dx=\mathcal{O}\left(\frac{1}{k^2}\right).
  \end{equation*}
  \item for $\alpha=1, \beta>1,$ $0<a<b<+\infty,$ we have
  \begin{equation*}
  \int\limits_a^bE_{1,\beta}\left(ik x\right)f(x)dx=\mathcal{O}\left(\frac{1}{k^{\beta-1}}\right).
  \end{equation*}
\end{itemize}
\subsection{Decay estimates for the time-fractional PDE}
Consider the time-fractional evolution equation
\begin{equation}\label{Shr1}
\mathcal{D}^\alpha_{0+,t}\mathcal{L}_xu(t,x)+u_{x}(t,x)=0,\,\,t>0,\,x\in\mathbb{R},
\end{equation}
with Cauchy data
\begin{equation}\label{Shr2}
u(0,x)=\psi(x),\,x\in \mathbb{R},
\end{equation}
where $\mathcal{L}f=\mathcal{F}^{-1}_\xi(\sqrt{1+\xi^2}\mathcal{F}(f))$ and $$\mathcal{D}^{\alpha}_{0+,t}u(t,x)=\frac{1}{\Gamma(1-\alpha)}\int\limits_0^t \left(t-s\right)^{-\alpha}u_s(s,x)ds$$ is the Caputo fractional derivative of order $0<\alpha<1.$

By using the direct and inverse Fourier and Laplace transforms, we can obtain a solution to problem \eqref{Shr1}-\eqref{Shr2} in the form
\begin{equation}\label{Shr3}
u(t,x)=\int\limits_{\mathbb{R}}e^{ix\xi}E_{\alpha,1}\left(i\frac{\xi}{\sqrt{1+\xi^2}}t^\alpha\right)\hat{\psi}(\xi)d\xi,
\end{equation} where $\hat{\psi}(\xi)=\frac{1}{\pi}\int\limits_{\mathbb{R}}e^{-iy\xi}\psi(y)dy.$ Suppose that $\psi\in L^1(\mathbb{R}),$ then classical Riemann-Lebesgue lemma implies that $\hat{\psi}\in C_0(\mathbb{R}).$ It is obvious that
$$\frac{\xi}{\sqrt{1+\xi^2}}\,\,\,\text{bounded for all}\,\xi\in\text{supp}(\hat{\psi})$$ and
$$\left(\frac{\xi}{\sqrt{1+\xi^2}}\right)'>0, \,\,\,\text{for all}\,\xi\in\text{supp}(\hat{\psi}).$$ Then using Theorem \ref{th1.2} we obtain the dispersive estimate
$$\|u(t,\cdot)\|_{L^\infty(\mathbb{R})}\lesssim \frac{\log(2+t)}{(1+t)^{\alpha}}\|\hat{\psi}\|_{L^1(\mathbb{R})},\,t\geq 0.$$

\section{Comparison of results for integrals \eqref{EQ:i1} and \eqref{0+}}\label{sec6}
For the convenience of readers, this section provides comparisons of results for integrals \eqref{EQ:i1}:
\begin{equation*}
I_{\alpha,\beta}(\lambda)=\int\limits_a^bE_{\alpha,\beta}\left(i\lambda \phi(x)\right)\psi(x)dx,\end{equation*}
and \eqref{0+}:
\begin{equation*}\tilde{I}_{\alpha,\beta}(\lambda)=\int\limits_a^bE_{\alpha,\beta}\left(i^\alpha\lambda \phi(x)\right)\psi(x)dx.\end{equation*}

Let us start with the results in $\mathbb{R}$ (i.e. when $a=-\infty,\,b=+\infty$). For both integrals $I_{\alpha,\beta}(\lambda)$ and $\tilde{I}_{\alpha,\beta}(\lambda)$ the following estimate holds:
\begin{equation*}\label{2}
\left.\begin{array}{l}|I_{\alpha,\beta}(\lambda)|\\{}\\|\tilde{I}_{\alpha,\beta}(\lambda)|\end{array}\right\}\lesssim \frac{1}{1+\lambda},\,\lambda\geq 0,\,\phi\not\equiv 0.
\end{equation*}
However, if for $I_{\alpha,\beta}(\lambda)$ above the estimate is true in case $0<\alpha< 1, \beta>0$, then for $\tilde{I}_{\alpha,\beta}(\lambda)$ it is true in case $0<\alpha< 2, \beta\geq \alpha+1,$ and in case $0<\alpha< 2, 1<\beta< \alpha+1$ integral $\tilde{I}_{\alpha,\beta}(\lambda)$ has the estimate $|\tilde{I}_{\alpha,\beta}(\lambda)|\lesssim \frac{1}{(1+\lambda)^{\frac{\beta-1}{\alpha}}}.$

In addition, if $\psi\in C_0(\mathbb{R}),$ $\phi$ is the real-valued differentiable function vanishing at the some point $c\in(\mathbb{R})$ and $\inf\limits_{x\in\text{supp}(\psi)}|\phi'(x)|> 0.$ Then
\begin{equation*}
|I_{\alpha,\beta}(\lambda)|\lesssim \frac{\log(2+\lambda)}{1+\lambda},\,\lambda\geq 0.
\end{equation*} The above estimate was not obtained for $\tilde{I}_{\alpha,\beta}(\lambda)$ in \cite{RuzT2}. In addition, this estimate is also carried out for semi-axes $[a,+\infty)$ and $(-\infty,b].$

Now let us move on to the results on a finite interval $[a,b],\,-\infty<a<b<+\infty.$

For both integrals $I_{\alpha,\beta}(\lambda)$ and $\tilde{I}_{\alpha,\beta}(\lambda)$ the following estimate holds:
\begin{equation*}
{\lambda}^{-\frac{1}{k}}\log^{\frac{1}{k}}(2+\lambda)\gtrsim \left\{\begin{array}{l}|I_{\alpha,\beta}(\lambda)|,\,0<\alpha< 1, \beta>0, \\{}\\|\tilde{I}_{\alpha,\beta}(\lambda)|,\,0<\alpha< 2, \beta\geq \alpha+1.\end{array}\right.
\end{equation*}

In the case $\alpha=\beta$ the integrals $I_{\alpha,\beta}(\lambda)$ and $\tilde{I}_{\alpha,\beta}(\lambda)$ has the same decay rate:
\begin{equation*}
{\lambda}^{-\frac{1}{k}}\gtrsim \left\{\begin{array}{l}|I_{\alpha,\alpha}(\lambda)|,\,0<\alpha< 1, \\{}\\|\tilde{I}_{\alpha,\alpha}(\lambda)|,\,0<\alpha< 2,\end{array}\right.
\end{equation*} which corresponds to the classical van der Corput's lemma.

Due to the presence of an Euler-type formula \eqref{FEuler} and thanks to estimates \eqref{ML1Op}, \eqref{ML2Op}, \eqref{ML3Op} for $I_{\alpha,\beta}(\lambda),\,0<\alpha\leq 1/2,\,\beta\geq2\alpha$, the following two-sided estimates are valid:
$$c_1\frac{\lambda}{1+\lambda^2}\leq|I_{\alpha,\beta}(\lambda)| \leq C_1\frac{\lambda}{1+\lambda^2},$$ where $C_1$ and $c_1$ are positive constants.

However, the above two-sided estimates do not hold for $\tilde{I}_{\alpha,\beta}(\lambda)$ and have not been studied in \cite{RuzT2}.

\section*{Conclusion and some open questions}
The main object of the paper was to estimate integral operator defined by
\begin{equation*}I_{\alpha,\beta}(\lambda)=\int\limits_a^bE_{\alpha,\beta}\left(i\lambda \phi(x)\right)\psi(x)dx,\end{equation*}
with $0<\alpha\leq 1,\,\beta>0.$

For $I_{\alpha,\beta}(\lambda)$, the following results have been obtained:
\begin{itemize}
  \item for different cases of parameters $\alpha$ and $\beta,$ some analogues of the van der Corput first lemma are obtained;
  \item for different cases of parameters $\alpha$ and $\beta,$ some analogues of the van der Corput second lemma are obtained;
  \item for $0<\alpha\leq 1/2$ and $\beta\geq\alpha$ bilateral optimal bounds estimates are obtained;
  \item some applications of the generalised van der Corput lemmas are presented.
\end{itemize}

In conclusion, we present some open questions related to generalisations of the van der Corput lemmas:
\begin{enumerate}
  \item are van der Corput-type estimates valid for $I_{\alpha,\beta}(\lambda)$ with $\alpha=1$ and $\beta<1$?
  \item is it possible to obtain bilateral optimal estimates as in Section \ref{Sec5} for $I_{\alpha,\beta}(\lambda)$ with $0<\alpha\leq 1/2$ and $\beta<2\alpha$?
  \item how to prove bilateral optimal estimates as in Section \ref{Sec5} for $I_{\alpha,\beta}(\lambda)$ for some particular case $1/2<\alpha< 1$ and $\beta>0$?
\end{enumerate}

\section*{Acknowledgements}
The authors would like to thanks the reviewers for their valuable comments and remarks.

\end{document}